\def\titlerunning#1{\gdef\titrun{#1}}
\def\author#1{\gdef\autrun{\def\and{\unskip, }#1}\gdef\@author{#1}}
\def\address#1{{\def\and{\\\hspace*{15.6pt}}\renewcommand{\thefootnote}{}\footnote{#1}}\markboth{\autrun}{\titrun}}
\def\email#1{email: \href{mailto:#1}{#1} }
\def\subjclass#1{\par\bigskip\noindent\textbf{Mathematics Subject Classification 2020.} #1}
\def\keywords#1{\par\smallskip\noindent\textbf{Keywords.} #1}
\newenvironment{dedication}{\itshape\center}{\par\medskip}
\newenvironment{acknowledgments}{\bigskip\small\noindent\textit{Acknowledgments.}}{\par}
\newtheorem{thm}{Theorem}[section]
\newtheorem{cor}[thm]{Corollary}
\newtheorem{lem}[thm]{Lemma}
\theoremstyle{definition}
\newtheorem*{rem}{Remark}
\numberwithin{equation}{section}
\begin{document}

\titlerunning{Semiclassical asymptotics for a class of singular Schr\"odinger operators}

\title{\textbf{Semiclassical asymptotics for a class of singular Schr\"odinger operators}}

\author{Rupert L. Frank \and Simon Larson}

\date{}

\maketitle

\address{R. L. Frank: Mathematisches Institut, Ludwig-Maximilans Universit\"at M\"unchen, Theresienstr.~39, 80333 M\"unchen, Germany, and Department of Mathematics, California Institute of Technology, Pasadena, CA 91125, USA; \email{r.frank@lmu.de, rlfrank@caltech.edu} \and S. Larson: Department of Mathematics, California Institute of Technology, Pasadena, CA 91125, USA; \email{larson@caltech.edu}}

\begin{dedication}
To Ari Laptev on the occasion of his 70th birthday
\end{dedication}

\begin{abstract}
Let $\Omega \subset \mathbb{R}^d$ be bounded with $C^1$ boundary. In this paper we consider Schr\"odinger operators $-\Delta+ W$ on $\Omega$ with $W(x)\approx\textup{dist}(x, \partial\Omega)^{-2}$ as $\textup{dist}(x, \partial\Omega)\to 0$. Under weak assumptions on $W$ we derive a two-term asymptotic formula for the sum of the eigenvalues of such operators.  
\subjclass{Primary 35P20}
\keywords{Schr\"odinger operator, Semiclassical asymptotics.}
\end{abstract}


\section{Introduction}

In this paper we consider semiclassical asymptotics for a class of Schr\"odinger operators
on bounded sets $\Omega\subset \mathbb{R}^d$ with potentials which are singular at the boundary and subject to Dirichlet boundary conditions. Specifically, for a bounded open set $\Omega\subset\mathbb{R}^d$ with $C^1$ boundary we consider Schr\"odinger operators
\begin{equation}\label{eq: informal op def}
  -\Delta+ W(x) \quad \mbox{with } W(x) \approx \textup{dist}(x, \Omega)^{-2} \mbox{ as } \textup{dist}(x, \partial\Omega)\to 0\,.
\end{equation}
These operators have purely discrete spectrum and our main interest is towards the asymptotic behavior of their eigenvalues. Our main result is a two-term asymptotic formula for the sum of the eigenvalues.

Before we formulate our main result it is necessary to explain more precisely how~\eqref{eq: informal op def} is to be interpreted. We shall assume that our potential decomposes as one part which is in $L^\infty_{\rm loc}(\Omega)$ and has the prescribed singular behavior at the boundary and a part which, in comparison, is well-behaved. To simplify the exposition we write
\begin{equation*}
  H_{\Omega, b, V}(h) = -h^2\Delta+h^2\Bigl(b^2(x)-\frac{1}{4}\Bigr)\frac{1}{\textup{dist}(x, \partial\Omega)^2}+h^2V(x)-1\quad \mbox{for }h>0\,.
\end{equation*}
Technically, the operator $H_{\Omega, b, V}(h)$ is defined through the quadratic form
\begin{equation}\label{eq: H quad form}
  u \mapsto \int_\Omega\Bigl(h^2|\nabla u(x)|^2 + h^2\Bigl(b^2(x)-\frac{1}{4}\Bigr)\frac{|u(x)|^2}{\textup{dist}(x, \partial\Omega)^2}+h^2 V(x)|u(x)|^2-|u(x)|^2\Bigr)\,dx
 \end{equation} 
 with form domain $\{ u\in H_0^1(\Omega): V_+u^2 \in L^1(\Omega)\}$. Throughout we shall assume that $V \in L^1(\Omega)$, $V_- \in L^{1+d/2}(\Omega)$, and that $b \in L^\infty(\Omega)$ is positive and satisfies
\begin{equation}\label{eq: b regularity}
  \lim_{r\to 0^+}\int_{\partial\Omega}\biggl[\,\sup_{y\in B_r(x) \cap \Omega}b(y)-\inf_{y\in B_r(x) \cap \Omega}b(y)\biggr] \,d\mathcal{H}^{d-1}(x) = 0\,.
\end{equation}
Here and in what follows we define $x_\pm = \frac{|x|\pm x}{2}$ and note that with this convention both $x_+$ and $x_-$ are non-negative. As a consequence of Hardy's inequality, the assumptions on $V$ and $b$ ensure that the quadratic form \eqref{eq: H quad form} is bounded from below and closed. Therefore it generates a selfadjoint, bounded from below operator $H_{\Omega,b,V}(h)$ in $L^2(\Omega)$. 

We emphasize that by positivity of $b$, we mean $\inf_\Omega b>0$. This assumption can naturally be relaxed to require positivity only in a neighborhood of the boundary by adjusting $V$ correspondingly. The regularity assumption~\eqref{eq: b regularity} implies that $b|_{\partial\Omega}$ can be made sense of as an element of $L^\infty(\partial\Omega)$; indeed, by~\eqref{eq: b regularity}, $b$ has a well-defined limit $\mathcal{H}^{d-1}$-almost everywhere on $\partial\Omega$ which is finite since $b\in L^\infty(\Omega)$. Our main result can now be stated as follows:

\begin{thm}\label{thm: main result}
  Let $\Omega \subset \mathbb{R}^d$ be open and bounded with $C^1$ boundary, $V \in L^1(\Omega)$ with $V_- \in L^{1+d/2}(\Omega)$, and let $b\in L^\infty(\Omega)$ be positive and satisfy~\eqref{eq: b regularity}. Then, as $h\to 0^+$,
  \begin{equation*}
    \textup{Tr}(H_{\Omega, b, V}(h))_- = L_dh^{-d}|\Omega| - \frac{L_{d-1}}{2}h^{-d+1}\int_{\partial\Omega}b(x)\,d\mathcal{H}^{d-1}(x) + o(h^{-d+1})\,,
  \end{equation*}
  where $L_d = (4\pi)^{-d/2}\Gamma(2+d/2)^{-1}.$
\end{thm}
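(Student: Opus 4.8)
The plan is to prove the asymptotics by a combination of a Dirichlet–Neumann bracketing argument near the boundary, a reduction to a one-dimensional model operator in the normal direction, and known sharp semiclassical bounds in the interior. First I would straighten the boundary locally: since $\partial\Omega$ is $C^1$, cover a tubular neighborhood $\{x:\textup{dist}(x,\partial\Omega)<\delta\}$ by finitely many charts in which $\Omega$ looks like a half-space up to a small $C^1$ perturbation, and decompose $\Omega$ into a boundary layer $\Omega_\delta=\{\textup{dist}(x,\partial\Omega)<\delta\}$ and an interior region $\Omega\setminus\overline{\Omega_\delta}$. On the interior region the potential $W$ is bounded (of order $\delta^{-2}$), so the standard semiclassical (Weyl) asymptotics for Schrödinger operators, e.g. via coherent states or the Berezin–Li–Yau / Weyl machinery, give $\textup{Tr}(H)_-=L_dh^{-d}|\Omega\setminus\overline{\Omega_\delta}|+o(h^{-d})$, with the $h^{2}V$ term and the $h^2 W$ term contributing only lower order because $V\in L^1$ and $W$ is bounded there; the $o$-term will need to be uniform enough that, after letting $\delta\to0$ at the end, it is absorbed into $o(h^{-d+1})$ — this forces one to track the $\delta$-dependence carefully.

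The heart of the matter is the boundary layer. In each chart, after flattening, one is led to operators of the form $-h^2\partial_t^2+h^2(b^2-\tfrac14)t^{-2}+(\text{lower order in }t)-1$ acting in the normal variable $t\in(0,\delta)$, tensored with the tangential directions. The key one-dimensional input is the spectral analysis of the Bessel-type operator $-\partial_t^2+(b^2-\tfrac14)t^{-2}$ on a half-line (with the appropriate boundary condition at $0$ selected by the form domain $H^1_0$, which for $b>0$ is the "Friedrichs" one): its contribution to the trace of the negative part, per unit tangential area and per unit spectral parameter, produces exactly the constant $L_{d-1}$ together with the factor $b(x)$ in the surface integral. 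I would compute this via the explicit resolvent/eigenfunction expansion for the Bessel operator (Hankel transform), or equivalently by scaling out $h$ and computing the relevant phase-space integral $\frac{1}{2\pi}\int\!\!\int (\xi^2+(b^2-\tfrac14)t^{-2}-1)_-\,dt\,d\xi$, which after regularization yields a finite multiple of $b$ — this is where the precise value $-\tfrac{L_{d-1}}{2}b(x)$ comes from, and it is the analogue of the classical $-\tfrac{L_{d-1}}{4}|\partial\Omega|$ boundary term for the pure Dirichlet Laplacian (recovered formally at $b=\tfrac12$, i.e. $W\equiv0$).

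To make the patching rigorous I would use Dirichlet–Neumann bracketing on the interfaces between charts and between the layer and the interior, plus a perturbation argument to replace the true distance function and the chart-distorted metric by the model ones, controlling errors using \eqref{eq: b regularity} to handle the variation of $b$ (this is precisely the hypothesis that lets one freeze $b$ to its boundary trace on each small patch with vanishing total error) and using $V_-\in L^{1+d/2}$, $V\in L^1$ together with Hardy's inequality to show $V$ contributes nothing at order $h^{-d+1}$. The main obstacle, and the step requiring the most care, is the uniform control of all error terms in the joint limit $h\to0$, $\delta\to0$: one must show that the boundary-layer estimate has the form $-\tfrac{L_{d-1}}{2}h^{-d+1}\int_{\partial\Omega}b\,d\mathcal H^{d-1}+o(h^{-d+1})+O(\delta\,h^{-d+1})$ type remainders while the interior error is $o_\delta(h^{-d})$, so that choosing $\delta=\delta(h)\to0$ slowly reconciles the two; the low regularity ($C^1$ boundary, merely $L^\infty$ and oscillation-controlled $b$) is what makes this delicate, since one cannot use any curvature or smoothness of $\partial\Omega$ beyond what a $C^1$ tubular neighborhood provides.
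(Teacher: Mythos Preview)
Your overall strategy---localize, straighten the boundary, reduce to a Bessel-type model on a half-line, diagonalize via the Hankel transform---matches the paper at the level of the boundary analysis. The genuine gap is the localization mechanism: Dirichlet--Neumann bracketing is not precise enough for a two-term formula. When you split $\Omega$ into $\Omega_\delta$ and $\Omega\setminus\overline{\Omega_\delta}$ and impose Dirichlet (for the lower bound) or Neumann (for the upper bound) on the artificial interface $\{\textup{dist}(\cdot,\partial\Omega)=\delta\}$, each piece acquires its own surface term of order $h^{-d+1}$ from that interface. For the interior piece with Dirichlet on the interface the two-term Weyl law contributes an extra $-\tfrac{L_{d-1}}{4}h^{-d+1}\mathcal H^{d-1}(\{\textup{dist}=\delta\})$; with Neumann the sign flips; and the boundary-layer piece picks up matching contributions from its far edge $\{t=\delta\}$. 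Since $\mathcal H^{d-1}(\{\textup{dist}=\delta\})\to\mathcal H^{d-1}(\partial\Omega)>0$ as $\delta\to0$, the upper and lower bounds from bracketing differ by a quantity comparable to $h^{-d+1}\mathcal H^{d-1}(\partial\Omega)$, and no choice of $\delta(h)\to0$ removes it. This is the classical obstruction: bracketing captures the leading Weyl term but not the second.

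The paper circumvents this by using an IMS-type continuum partition of unity $\{\phi_u\}_{u\in\mathbb R^d}$ (Section~\ref{sec: local to global}) supported in balls of radius $\ell(u)\sim\textup{dist}(u,\partial\Omega)$ in the bulk and $\ell(u)=\ell_0=h/\varepsilon_0$ near the boundary. The localization error in Lemma~\ref{lem: localization Schrodinger} is governed by $h^{-d+2}\int\ell(u)^{-2}\,du=O(\varepsilon_0\,h^{-d+1})$, which vanishes upon sending $\varepsilon_0\to0$ after $h\to0$; there is no artificial hypersurface and hence no spurious surface term. To make these localized traces finite in the presence of the $\textup{dist}^{-2}$ singularity the paper also needs a local Hardy--Lieb--Thirring inequality (Lemma~\ref{lem: local HLT}); invoking Hardy's inequality alone, as you suggest, does not directly bound $\textup{Tr}(\phi H\phi)_-$.

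A smaller point: the phase-space integral $\tfrac{1}{2\pi}\iint(\xi^2+(b^2-\tfrac14)t^{-2}-1)_-\,dt\,d\xi$ you propose is divergent, so it cannot by itself produce the coefficient. The constant $b/2$ comes from the \emph{difference} between the exact Hankel-diagonalized trace and the Weyl volume term, computed in the paper via the Bessel-function identity $\int_0^\infty P_b(t)\,dt=b/2$ (Lemmas~\ref{lem: Pnu asymptotics} and~\ref{lem: Pnu integral identity}). Your mention of the Hankel transform is the right tool; the phase-space heuristic is not an equivalent route.
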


As a corollary of Theorem~\ref{thm: main result} we deduce:

\begin{cor}\label{cor: convergence of density}
  Let $\Omega\subset \mathbb{R}^d$ be open and bounded with $C^1$ boundary. Then, with $\Delta_\Omega$ denoting the Dirichlet Laplace operator in $\Omega$, as $h \to 0^+$ and in the sense of measures
  \begin{equation*}
    h^{d+1}\frac{\mathds{1}(-h^2\Delta_\Omega\leq 1)(x, x)}{\mathrm{dist}(x, \partial\Omega)^2}\,dx \to \frac{L_{d-1}}{2}\mathcal{H}^{d-1}|_{\partial\Omega}\,.
  \end{equation*}
\end{cor}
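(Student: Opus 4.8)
The plan is to deduce the measure convergence from the two-term asymptotics applied to a well-chosen family of perturbed potentials, using the fact that the Riesz mean / trace of negative part is a primitive (in a suitable parameter) of the quantity whose integral against test functions we want to control. Concretely, fix $\varphi \in C(\overline{\Omega})$ (or $C_c(\mathbb{R}^d)$ restricted to $\overline\Omega$); I want to show
\[
  h^{d+1}\int_\Omega \varphi(x)\,\frac{\mathds{1}(-h^2\Delta_\Omega\le 1)(x,x)}{\mathrm{dist}(x,\partial\Omega)^2}\,dx \to \frac{L_{d-1}}{2}\int_{\partial\Omega}\varphi\,d\mathcal{H}^{d-1}\,.
\]
The key identity is that for the Dirichlet Laplacian (i.e.\ $b\equiv 1/2$, so that $b^2 - 1/4 = 0$, and $V$ a bounded perturbation), one has the Feynman–Hellmann / derivative-in-coupling relation: writing $H_{\Omega,1/2,tV}(h)$ and differentiating $\mathrm{Tr}(H_{\Omega,1/2,tV}(h))_-$ with respect to $t$ at $t=0$ gives $-h^2\int_\Omega V(x)\,\mathds{1}(H_{\Omega,1/2,0}(h)\le 0)(x,x)\,dx$, and $\mathds{1}(H_{\Omega,1/2,0}(h)\le 0) = \mathds{1}(-h^2\Delta_\Omega \le 1)$. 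So the object in the corollary, integrated against a test function $\varphi$, is essentially a directional derivative at $t=0$ of $h^{d-1}\,\mathrm{Tr}(H_{\Omega,1/2,t\varphi/\mathrm{dist}^2}(h))_-$ — but of course $\varphi/\mathrm{dist}(x,\partial\Omega)^2$ is exactly a potential of the singular type covered by Theorem 1.2 once absorbed into $b$.

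The cleaner route: for a test function $\varphi \ge 0$ supported near $\partial\Omega$ and small $t>0$, apply Theorem 1.2 with $b_t^2(x) = \tfrac14 + t\varphi(x)$, i.e.\ $b_t(x) = \sqrt{1/4 + t\varphi(x)}$, and $V=0$. One checks $b_t$ is positive and satisfies \eqref{eq: b regularity} (since $\varphi$ is continuous, hence uniformly continuous on $\overline\Omega$, the oscillation condition is immediate). Theorem 1.2 gives
\[
  \mathrm{Tr}(H_{\Omega,b_t,0}(h))_- = L_d h^{-d}|\Omega| - \frac{L_{d-1}}{2}h^{-d+1}\int_{\partial\Omega}\sqrt{\tfrac14 + t\varphi}\,d\mathcal{H}^{d-1} + o_t(h^{-d+1})\,.
\]
Subtracting the $t=0$ case (plain Dirichlet Laplacian, $b_0 \equiv 1/2$):
\[
  \mathrm{Tr}(H_{\Omega,b_t,0}(h))_- - \mathrm{Tr}(H_{\Omega,1/2,0}(h))_- = -\frac{L_{d-1}}{2}h^{-d+1}\int_{\partial\Omega}\Bigl(\sqrt{\tfrac14+t\varphi}-\tfrac12\Bigr)d\mathcal{H}^{d-1} + o(h^{-d+1})\,.
\]
On the other hand, by the variational principle the left-hand side is monotone in $t$, and by convexity/concavity of $t\mapsto \mathrm{Tr}(H_{\Omega,b_t,0}(h))_-$ (it is a minimum over trial states of an affine-in-$t$ family of quadratic forms, hence concave in $t$), standard one-sided difference-quotient bounds sandwich $\frac{d}{dt}\big|_{t=0}$, which by Feynman–Hellmann equals $-h^2\int_\Omega \varphi(x)\,\mathrm{dist}(x,\partial\Omega)^{-2}\,\mathds{1}(-h^2\Delta_\Omega\le1)(x,x)\,dx$. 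Dividing the displayed difference by $t$ and letting $t \to 0^+$ on the right gives $-\frac{L_{d-1}}{2}h^{-d+1}\int_{\partial\Omega}\varphi\,d\mathcal{H}^{d-1}$ (since $\frac{d}{dt}\sqrt{1/4+t\varphi}\big|_{t=0} = \varphi$). Matching the $h^{-d+1}$ coefficients, multiplying by $-h^{d-1}$, and using that $\mathds{1}(-h^2\Delta_\Omega\le1)(x,x)\ge0$ so the measures are positive (hence vague convergence on nonnegative $\varphi$ suffices, then extend by linearity to all $\varphi\in C_c$), yields the claim. One also needs a tightness/no-mass-escaping-to-the-interior argument: test with $\varphi\equiv1$ to see the total masses converge to $\frac{L_{d-1}}{2}\mathcal H^{d-1}(\partial\Omega)$, and note that for $\varphi$ supported away from $\partial\Omega$ the left side is $O(h^{d+1}\cdot h^{-d}) = O(h)\to0$ by the Weyl law with remainder on compact subsets, confirming all mass concentrates on $\partial\Omega$.

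The main obstacle I anticipate is making the interchange of limits rigorous: Theorem 1.2 gives, for each fixed $t$, an $o(h^{-d+1})$ remainder, but the rate is not claimed to be uniform in $t$, so one cannot naively send $h\to0$ and then $t\to0$ inside a single estimate. The fix is precisely the convexity argument above: for fixed $h$, concavity of $t \mapsto \mathrm{Tr}(H_{\Omega,b_t,0}(h))_-$ makes the difference quotient $\frac{1}{t}[\mathrm{Tr}(H_{\Omega,b_t,0}(h))_- - \mathrm{Tr}(H_{\Omega,1/2,0}(h))_-]$ monotone in $t$, so it is squeezed between its value at a fixed small $t$ and its $t\to0^+$ limit (the Feynman–Hellmann derivative); applying Theorem 1.2 only at the two fixed values $0$ and $t$ (where the remainder is a genuine $o(h^{-d+1})$) and then letting $t\to0$ after dividing out $h^{-d+1}$ closes the argument. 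A secondary technical point is verifying that $b_t(x)=\sqrt{1/4+t\varphi(x)}$ indeed satisfies \eqref{eq: b regularity} and $\inf_\Omega b_t>0$ for small $t$, which is routine given $\varphi\in C(\overline\Omega)$; and checking the Feynman–Hellmann formula rigorously in this singular-form setting, which follows from first-order perturbation theory for the finitely many negative eigenvalues together with dominated convergence.
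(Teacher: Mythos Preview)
Your approach is correct and is precisely the one the paper indicates: apply Theorem~\ref{thm: main result} with the singular perturbation $t\varphi(x)/\mathrm{dist}(x,\partial\Omega)^2$ absorbed into $b$, use a Feynman--Hellmann/convexity argument to extract the derivative at $t=0$, and send first $h\to0$ then $t\to0$. One small slip: $t\mapsto\mathrm{Tr}(H_{\Omega,b_t,0}(h))_-$ is \emph{convex} in $t$ (as a supremum of affine functions of $t$), not concave, but this does not affect your sandwiching argument, which only needs monotonicity of the one-sided difference quotients together with the corresponding bound from $t<0$.
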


\begin{proof}
  The corollary follows from a standard Feynman--Hellmann argument (cf.~\cite{LiebSimon_AdvMath77}) and Theorem~\ref{thm: main result} applied with the potential $W(x) = t f(x)/\mathrm{dist}(x, \partial\Omega)^2$ for $f \in C(\overline{\Omega})$ and sending first $h$ then $t$ to zero.
\end{proof}

Spectral asymptotics for differential operators that degenerate at the boundary of the domain are not new. However, the results in the literature mainly concern cases in which the operator degenerates at leading order and how this affects the first term in the asymptotics, see~\cite{BirmanSolomjak_Survey79,BirmanSolomjak_QuantEmbedding} and references therein. 
While the class of operators considered here is drastically less singular, our interest is towards the effect of the degeneracy on the second term in the asymptotics.

In the special case of the Dirichlet Laplacian, i.e.\ $V\equiv 0$ and $b\equiv1/2$, Theorem~\ref{thm: main result} was proved in~\cite{FrankGeisinger_11,FrankGeisinger_12}. The strategy of our proof follows closely that developed there, but several new obstacles need to be circumvented in the presence of the potential, which is singular at the boundary. The idea is to localize the operator in balls whose size varies depending on the distance to the boundary and $h$. In a ball far from the boundary the influence of the boundary conditions and the potential both have a negligible effect and precise asymptotics can be obtained through standard methods. In a ball close to the boundary the regularity of the boundary allows to map the problem to a half-space where asymptotics are obtained by explicitly diagonalizing an effective operator. The main new ingredients needed here is to control how the straightening of the boundary affects the singular part of the potential and to understand how the potential enters in the half-space problem. 

The works~\cite{FrankGeisinger_11,FrankGeisinger_12} for domains with $C^1$ boundaries were extended to the case of Lipschitz boundaries in \cite{FrankLarson}, see also \cite{FrankLarson_20}. Since the (weak) Hardy constant can be smaller than $1/4$ for Lipschitz domains, it is not clear how to generalize the results of the present paper to this setting.

The plan for the paper is as follows. In Section~\ref{sec: Prelmininaries} we recall a number of results concerning changes of variables mapping $\partial \Omega$ locally to a hyperplane. In particular, Lemma~\ref{lem: A geometric lemma} describes how such a mapping affects the singular part of our potential. We also prove a local Hardy--Lieb--Thirring inequality which will be crucial in controlling error terms appearing in our analysis, and which replaces the Lieb--Thirring inequality in~\cite{FrankGeisinger_11} in the absence of a singular potential. In Section~\ref{sec: Local asymptotics} we provide local asymptotics, both in the bulk of our domain and close to the boundary. Finally, in Section~\ref{sec: local to global} we adapt the localization procedure developed in~\cite{FrankGeisinger_11,FrankGeisinger_12,FrankLarson} to our current setting and use it to piece together the local asymptotics of Section~\ref{sec: Local asymptotics}, thus proving Theorem~\ref{thm: main result}.

The letter $C$ will denote a constant whose value can change at each occurence.

\medskip

We are deeply grateful to Ari Laptev for sharing his fascination for spectral estimates and Hardy's inequality with us and we would like to dedicate this paper to him on the occasion of his 70th birthday.

\section{Preliminaries}\label{sec: Prelmininaries}

\subsection{Straightening the boundary}\label{sec: Straightening of bdry}
Let $\mathbb{R}^d_+ = \{ y\in \mathbb{R}^d: y_d >0\}$. Let $B\subset \mathbb{R}^d$ be an open ball of radius $\ell$ centred at a point $x_0\in \partial\Omega$. By rotating and translating we may assume that $x_0=0$ and $\nu_0=(0, \ldots, 0, 1)$ is the inward pointing unit normal to $\partial \Omega$ at $x_0$. Since $\Omega$ is bounded with $C^1$ boundary, there exists a non-decreasing modulus of continuity $\omega\colon \mathbb{R}_+\to [0, 1]$ such that, if $\ell$ is small enough, there exists a function $f\colon \mathbb{R}^{d-1}\to \mathbb{R}$ satisfying $|\nabla f(x')| \leq \omega(|x'|)$ such that
\begin{equation*}
  \partial\Omega \cap B_{2\ell}(0) = \{(x', x_d) \in \mathbb{R}^{d-1}\times \mathbb{R}: x_d = f(x')\}\cap B_{2\ell}(0)\,.
\end{equation*}
Note that, by the choice of coordinates, $f(0)=0$ and $\nabla f(0)=0$.

Set $\,\mathcal{X}=\{(x', x_d)\in \mathbb{R}^{d-1}\times \mathbb{R}: |x'|<2\ell\}$. We define a diffeomorphism $\Phi \colon \mathcal{X} \to \mathbb{R}^d$
by $\Phi_j(x)=x_j$ for $j=1, \ldots, d-1$ and $\Phi_d(x)= x_d-f(x')$. Note that the Jacobian determinant of $\Phi$ equals $1$ and that the inverse of $\Phi$ is well-defined on $\Phi(\mathcal{X})=\mathcal{X}$. 
The inverse is given by $\Phi_j^{-1}(y)=y_j$ for $j=1, \ldots, d-1$ and $\Phi^{-1}_d(y)=y_d+f(y')$. 

In the following lemma we gather some results whose proofs are standard and can be found, for instance, in~\cite[Section 4]{FrankGeisinger_12}.
\begin{lem}[Straightening of the boundary]\label{lem: Straightening of boundary}
  Let $B, \Phi$ be as above and for $u\colon B \to \mathbb{R}$ set $\tilde u = u \circ \Phi^{-1}$. For $0<\ell \leq c(\omega)$ and with $C$ depending only on $d$, we have:
  \begin{enumerate}
    \item \label{itm: volume preservation} if $u \in L^1(B)$ then
    \begin{equation*}
    \int_{B} u(x)\,dx = \int_{\Phi(B)}\tilde u(y)\,dy\,.
  \end{equation*}

  \item\label{itm: boundary integral change} if $u \in L^\infty(\partial\Omega\cap B)$ then
  \begin{equation*}
    \biggl|\int_{\partial\Omega \cap B}u(x)\,d\mathcal{H}^{d-1}(x)- \int_{\partial\mathbb{R}^{d}_+ \cap \Phi(B)}\tilde u(y)\,d\mathcal{H}^{d-1}(y)\biggr|\leq C \ell^{d-1}\omega(\ell)^2\|u\|_{L^\infty}\,.
  \end{equation*}

  \item\label{itm: gradient estimate} if $u\in H^1_0(\Omega \cap B)$ then $\tilde u \in H^1_0(\mathbb{R}^d_+ \cap \Phi(B))$ and
  \begin{equation*}
    \biggl|\int_{\Omega\cap B} |\nabla u(x)|^2\,dx - \int_{\mathbb{R}^d_+ \cap \Phi(B)} |\nabla \tilde u(y)|^2\,d y \biggr| \leq C \omega(\ell) \int_{\mathbb{R}^d_+ \cap \Phi(B)} |\nabla \tilde u(y)|^2\,dy\,.
  \end{equation*}

  \item\label{itm: sup gradient estimate} if $u \in C^1_0(\mathbb{R}^d)$ is supported in $\overline{B}$ then, after extension by zero, $\tilde u \in C^1_0(\mathbb{R}^d)$ with $\mathrm{supp}\,\tilde u\subseteq \overline{B_{2\ell}(0)}$ and $\|\nabla \tilde u\|_{L^\infty} \leq C\|\nabla u\|_{L^\infty}$.

  \end{enumerate}

\end{lem}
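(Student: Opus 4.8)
The proof is a sequence of elementary change-of-variables computations using the explicit diffeomorphism $\Phi$, exploiting three facts: the Jacobian determinant of $\Phi$ is identically $1$, the boundary gradient $\nabla f$ is controlled by the modulus of continuity $\omega$ with $\nabla f(0)=0$, and $\Phi$ maps $\partial\Omega\cap B$ into the hyperplane $\partial\mathbb{R}^d_+$. I would treat the four items in order.

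For item \eqref{itm: volume preservation}, since $\det D\Phi \equiv 1$ and $\Phi$ is a bijection from $\mathcal{X}$ onto $\mathcal{X}$ with $\Phi(B)\subseteq\mathcal{X}$ for $\ell$ small, the standard change-of-variables formula gives the identity immediately. For item \eqref{itm: boundary integral change}, I would parametrize $\partial\Omega\cap B$ by $x'\mapsto(x',f(x'))$ and $\partial\mathbb{R}^d_+\cap\Phi(B)$ by the same $x'$ (note $\Phi(x',f(x'))=(x',0)$), so that the two integrals become $\int (u\circ\Phi^{-1})(x',0)\,\bigl(\sqrt{1+|\nabla f(x')|^2}-1\bigr)\,dx'$ plus a discrepancy coming from the fact that $\Phi$ need not map $\partial\Omega\cap B$ exactly onto $\partial\mathbb{R}^d_+\cap\Phi(B)$; using $0\le\sqrt{1+t}-1\le t/2$ with $t=|\nabla f(x')|^2\le\omega(\ell)^2$ and the fact that the relevant region has $(d-1)$-measure $\le C\ell^{d-1}$ yields the bound $C\ell^{d-1}\omega(\ell)^2\|u\|_{L^\infty}$. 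For item \eqref{itm: sup gradient estimate}, the support statement is immediate from $\Phi(B_\ell(0))\subseteq B_{2\ell}(0)$ (since $|f(x')|\le|x'|\omega(\ell)\le\ell$), and the gradient bound follows from the chain rule $\nabla\tilde u=(D\Phi^{-1})^T(\nabla u)\circ\Phi^{-1}$ together with $\|D\Phi^{-1}\|_{L^\infty}\le 1+\omega(\ell)\le 2$.

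The main work is item \eqref{itm: gradient estimate}, and it is the step I expect to be the principal obstacle. Writing $u=\tilde u\circ\Phi$ and applying the chain rule, one gets $\int_{\Omega\cap B}|\nabla u|^2\,dx=\int_{\mathbb{R}^d_+\cap\Phi(B)}\langle G(y)\nabla\tilde u(y),\nabla\tilde u(y)\rangle\,dy$, where $G=(D\Phi)(D\Phi)^T\circ\Phi^{-1}$ is the metric tensor induced by the change of variables (using $|\det D\Phi|=1$ so no extra weight appears). An explicit computation gives $D\Phi^{-1}$ with entries differing from the identity only in the last row, where the off-diagonal entries are $\partial_j f$; hence $G-\mathrm{Id}$ has all entries bounded pointwise by $C(|\nabla f|+|\nabla f|^2)\le C\omega(\ell)$ on $\Phi(B)$ (after checking that $\Phi^{-1}$ maps this set into the region where $|\nabla f|\le\omega(\ell)$). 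Therefore $|\langle(G-\mathrm{Id})\nabla\tilde u,\nabla\tilde u\rangle|\le C\omega(\ell)|\nabla\tilde u|^2$ pointwise, and integrating gives the claimed estimate. The one subtlety to handle carefully is the membership $\tilde u\in H^1_0(\mathbb{R}^d_+\cap\Phi(B))$: this follows because $\Phi$ is a bi-Lipschitz homeomorphism mapping $\Omega\cap B$ onto $\mathbb{R}^d_+\cap\Phi(B)$ and $\partial\Omega\cap B$ onto a relatively open subset of $\partial\mathbb{R}^d_+$, so it preserves the class $H^1_0$; one should note that near the parts of $\partial B$ not on $\partial\Omega$ the function $u$ already vanishes since $u\in H^1_0(\Omega\cap B)$, and this is preserved. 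Since all of these computations are standard and appear in \cite[Section 4]{FrankGeisinger_12}, I would state them and refer there for the routine verifications, emphasizing only the dependence of the constants on $d$ and the smallness condition $\ell\le c(\omega)$ needed to ensure $\Phi(B)\subseteq\mathcal{X}$ and the graph representation of $\partial\Omega$ is valid.
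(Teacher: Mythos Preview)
Your plan is correct and follows exactly the standard computations the paper has in mind; in fact the paper does not prove this lemma at all but simply states that the proofs ``are standard and can be found, for instance, in \cite[Section~4]{FrankGeisinger_12}.'' Your outline reproduces that standard argument (unit Jacobian for item~\ref{itm: volume preservation}, the area-element estimate $\sqrt{1+|\nabla f|^2}-1\le\tfrac12|\nabla f|^2\le\tfrac12\omega(\ell)^2$ for item~\ref{itm: boundary integral change}, the metric-tensor bound $\|G-\mathrm{Id}\|\le C\omega(\ell)$ for item~\ref{itm: gradient estimate}, and the chain rule for item~\ref{itm: sup gradient estimate}), so there is nothing to add.
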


In addition to the properties in Lemma~\ref{lem: Straightening of boundary} we will need the following result which enables us to control the change of the singular part of our potentials:
\begin{lem}\label{lem: A geometric lemma}
  Let $B, \Phi$ be as above. There is a constant $C$ depending only on $d$ such that for any $x \in B \cap \Omega$,
  \begin{equation}\label{eq:distdist}
    0 \leq \frac{1}{\textup{dist}(x, \partial\Omega)^2}- \frac{1}{\textup{dist}(\Phi(x),\partial \mathbb{R}^d_+)^2} \leq C\frac{\omega(2\ell)^2}{\textup{dist}(\Phi(x),\partial \mathbb{R}^d_+)^2}\,.
  \end{equation}
\end{lem}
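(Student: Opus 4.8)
The plan is to compute both distances as explicitly as the $C^1$ structure allows and compare them. Fix $x = (x', x_d) \in B \cap \Omega$ and write $y = \Phi(x) = (x', x_d - f(x'))$, so that $\textup{dist}(\Phi(x), \partial\mathbb{R}^d_+) = y_d = x_d - f(x')$. For the numerator, note that $\partial\Omega \cap B_{2\ell}(0)$ is the graph $\{z_d = f(z')\}$, so $\textup{dist}(x, \partial\Omega)$ is realized (for $\ell$ small enough that the nearest boundary point stays inside $B_{2\ell}(0)$, which holds since $x \in B$ has radius $\ell$ and the graph is $\omega(2\ell)$-Lipschitz) by minimizing $|x - (z', f(z'))|^2$ over $z' \in \mathbb{R}^{d-1}$. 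First I would record the elementary one-variable fact that for a function with $|\nabla f| \le \omega := \omega(2\ell)$ and for a point above the graph at vertical height $t = x_d - f(x') > 0$, the Euclidean distance to the graph satisfies
\begin{equation*}
  \frac{t}{\sqrt{1+\omega^2}} \;\le\; \textup{dist}(x, \partial\Omega) \;\le\; t\,.
\end{equation*}
The upper bound is immediate by taking $z' = x'$. The lower bound follows because, writing $z'$ for (a) minimizer, the mean value theorem gives $|f(x') - f(z')| \le \omega |x' - z'|$, hence $|x - (z', f(z'))|^2 = |x'-z'|^2 + (x_d - f(z'))^2 \ge |x'-z'|^2 + (t - \omega|x'-z'|)^2$; minimizing the right-hand side over $s = |x'-z'| \ge 0$ yields $t^2/(1+\omega^2)$ (attained at $s = \omega t/(1+\omega^2)$).

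**Deriving the two inequalities.** From the display above, squaring and inverting gives, with $t = \textup{dist}(\Phi(x), \partial\mathbb{R}^d_+)$,
\begin{equation*}
  \frac{1}{t^2} \;\le\; \frac{1}{\textup{dist}(x,\partial\Omega)^2} \;\le\; \frac{1+\omega^2}{t^2}\,,
\end{equation*}
which is exactly \eqref{eq:distdist}: the left inequality is the claimed $0 \le \frac{1}{\textup{dist}(x,\partial\Omega)^2} - \frac{1}{t^2}$, and subtracting $1/t^2$ from the right inequality gives the upper bound with $C = 1$ (one may absorb the harmless $\omega^2$ versus $\omega(2\ell)^2$ if one prefers a cleaner constant, but they coincide here). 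So in fact $C$ can be taken to be $1$ and independent even of $d$; I would state it with a general $C$ as in the lemma to match the paper's conventions.

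**The main obstacle.** The only genuine subtlety is the claim that the nearest point of $\partial\Omega$ to $x$ actually lies on the graph portion $\partial\Omega \cap B_{2\ell}(0)$, so that the local graph representation may be used; this is what forces the smallness condition $0 < \ell \le c(\omega)$ inherited from the setup. Concretely, $x \in B = B_\ell(0)$ and $x \in \Omega$ means $\textup{dist}(x, \partial\Omega) \le |x| < \ell$ is already far from sharp — better, $\textup{dist}(x,\partial\Omega) \le x_d \le 2\ell$ trivially isn't enough either, so I would instead argue: any boundary point $z$ with $|z| \ge 2\ell$ has $|x - z| \ge 2\ell - \ell = \ell$, while the vertical projection $(x', f(x'))$ has $|x - (x',f(x'))| = x_d - f(x')$, and since both $x_d$ (because $x \in B_\ell$) and $|f(x')| \le \omega|x'| \le \omega \ell$ are at most $(1+\omega)\ell < 2\ell$, this competitor has distance $< \ell$ when $\omega$ is small; hence the minimizer lies in $B_{2\ell}(0)$ and the graph formula applies. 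Everything else is the elementary optimization already carried out above, so I expect the proof to be short once this localization point is cleanly stated.
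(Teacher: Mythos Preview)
Your approach is correct and genuinely simpler than the paper's. The paper takes the nearest point $z=(z',f(z'))$, invokes the normal equation $x=z+|x-z|\,(-\nabla f(z'),1)/\sqrt{1+|\nabla f(z')|^2}$ to derive the exact identities $|x'-z'|^2=|x-z|^2\frac{|\nabla f(z')|^2}{1+|\nabla f(z')|^2}$ and $|x_d-f(z')|^2=\frac{|x-z|^2}{1+|\nabla f(z')|^2}$, and then computes the difference $\frac{1}{|x-z|^2}-\frac{1}{|x_d-f(x')|^2}$ explicitly before bounding each piece. You bypass all of this by using only the $\omega$-Lipschitz bound on $f$ and the elementary optimization $\min_{s\ge 0}\bigl[s^2+(t-\omega s)^2\bigr]=t^2/(1+\omega^2)$, which even yields the explicit constant $C=1$. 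That is a cleaner route.

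Two small repairs are needed. First, the step $(x_d-f(z'))^2\ge (t-\omega s)^2$ does not follow from $|f(x')-f(z')|\le\omega s$ alone; you need $t-\omega s\ge 0$, which you get for free at the minimizer since $s=|x'-z'|\le|x-z|=\textup{dist}(x,\partial\Omega)\le t$ and $\omega\le 1$. Second, your localization argument concludes that the vertical competitor has distance $<\ell$ from the bound $(1+\omega)\ell$, which is false. The fix is immediate: the center $0=x_0$ lies on $\partial\Omega$, so $\textup{dist}(x,\partial\Omega)\le|x|<\ell$, while any boundary point outside $B_{2\ell}(0)$ is at distance $\ge\ell$ from $x$; hence the minimizer lies in $B_{2\ell}(0)$ and the graph representation applies.
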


\begin{proof}
  By definition of $f$, $(x', f(x'))\in \partial\Omega$, thus $\textup{dist}(x, \partial\Omega) \leq |x-(x', f(x'))|=|x_d-f(x')| = \textup{dist}(\Phi(x), \partial \mathbb{R}^d_+)$ which implies the lower bound in \eqref{eq:distdist}.

  To prove the upper bound, let $z= (z', f(z'))\in \partial\Omega$ be such that $\textup{dist}(x, \partial\Omega)=|x- z|$. Since $\partial\Omega$ is parametrized by $f$ in the larger ball $B_{2\ell}(x_0)$ it is clear that such a point exists and that $z\in B_{2\ell}(x_0)$. The point $z$ might not be uniquely determined but that will not play any role in what follows. 

  We begin by rewriting the expression we want to bound in terms of $z$:
  \begin{align*}
    \frac{1}{\textup{dist}(x, \partial\Omega)^2}- \frac{1}{\textup{dist}(\Phi(x),\partial \mathbb{R}^d_+)^2} 
    &=
    \frac{1}{|x-z|^2}- \frac{1}{|x_d-f(x')|^2}\\
     &=
     \frac{(f(x')-f(z'))(f(x')+f(z')-2x_d)-|x'-z'|^2}{|x-z|^2|x_d-f(x')|^2}\,.
  \end{align*}
  Since $f$ is $C^1$ and by the definition of $z$ it holds that
  \begin{equation*}
     x= z+ |x-z| \frac{(-\nabla f(z'), 1)}{\sqrt{1+|\nabla f(z')|^2}}\,.
   \end{equation*} 
   Consequently,
   \begin{equation}\label{eq: comparison distances}
     |x'-z'|^2 = |x-z|^2\frac{|\nabla f(z')|^2}{1+|\nabla f(z')|^2}
   \quad\mbox{and}\quad
     |x_d-f(z')|^2  = \frac{ |x-z|^2}{1+|\nabla f(z')|^2}\,.
   \end{equation}
   Note also that $f(x')\leq f(z')\leq x_d$. From the above identities one finds
   \begin{equation}\label{eq: distances in terms of f}
   \begin{aligned}
    \frac{1}{\textup{dist}(x, \partial\Omega)^2}- \frac{1}{\mathrm{dist}(\Phi(x), \partial \mathbb{R}^d_+)^2} 
     &=
     \frac{1}{|x_d-f(x')|^2}\Biggl[\frac{|f(x')-f(z')|^2}{|x-z|^2} \\
     &\quad
     +2\frac{|f(x')-f(z')|}{|x-z|\sqrt{1+|\nabla f(z')|^2}}- \frac{|\nabla f(z')|^2}{1+|\nabla f(z')|^2}\Biggr]\,.
  \end{aligned}
  \end{equation}
  By the fundamental theorem of calculus and~\eqref{eq: comparison distances}
  \begin{align*}
    |f(x')-f(z')| = \biggl|(x'-z')\int_0^1 \nabla f(t x'+(1-t)z')\,dt\biggr|
    \leq
    \omega(2\ell)^2|x-z| \,.
  \end{align*}
  Therefore
  \begin{align*}
    \frac{|f(x')-f(z')|^2}{|x-z|^2}+ 2 \frac{|f(x')-f(z')|}{|x-z|\sqrt{1+|\nabla f(z')|^2}}
     - \frac{|\nabla f(z')|^2}{1+|\nabla f(z')|^2}
     \leq C \omega(2\ell)^2\,. 
  \end{align*}
  Combined with~\eqref{eq: distances in terms of f} this completes the proof of Lemma~\ref{lem: A geometric lemma}.
\end{proof}

\subsection{A local Hardy--Lieb--Thirring inequality}

The aim of this subsection is to prove a bound for localized traces of our operator. Before stating the result we recall the following Hardy inequality due to Davies~\cite{Davies95} (combine his Theorems 2.3 and 2.4).
\begin{lem}\label{lem: Davies Hardy ineq}
  Let $\Omega \subset \mathbb{R}^d$ be open and bounded with $C^1$-boundary. Then for any $\varepsilon>0$ there is a $c_H(\varepsilon, \Omega)\geq 0$ such that for all $u \in H^1_0(\Omega)$,
  \begin{equation*}
    \int_{\Omega}|\nabla u(x)|^2 \,dx + \Bigl(\varepsilon-\frac{1}{4}\Bigr) \int_\Omega \frac{|u(x)|^2}{\mathrm{dist}(x, \partial\Omega)^2}\,dx \geq - c_H(\varepsilon, \Omega) \int_\Omega |u(x)|^2\,dx\,.
  \end{equation*}
\end{lem}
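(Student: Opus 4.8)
The lemma is a Hardy inequality with a bounded (but $\Omega$- and $\varepsilon$-dependent) remainder, and the plan is to derive it from the sharp half-space Hardy inequality
\[
  \int_{\mathbb{R}^d_+}|\nabla v(y)|^2\,dy \ge \tfrac14\int_{\mathbb{R}^d_+}\frac{|v(y)|^2}{y_d^2}\,dy\,,\qquad v\in H^1_0(\mathbb{R}^d_+)\,,
\]
(which is just the one-dimensional Hardy inequality in $y_d$ integrated over $y'$), combined with the boundary-straightening results of Lemmas~\ref{lem: Straightening of boundary} and~\ref{lem: A geometric lemma} and an IMS-type localization. By density it suffices to prove the bound for $u\in C_c^\infty(\Omega)$; the passage to general $u\in H^1_0(\Omega)$ then follows from a routine approximation argument using Fatou's lemma, and in particular yields $\textup{dist}(\cdot,\partial\Omega)^{-1}u\in L^2(\Omega)$. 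One may of course also simply quote Davies' Theorems~2.3 and~2.4 directly, as in the statement above.

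The first step is to fix the localization scale. Given $\varepsilon\in(0,\tfrac14)$ (for $\varepsilon\ge\tfrac14$ the inequality is trivial with $c_H=0$), choose $\ell_0=\ell_0(\varepsilon,\Omega)\in(0,c(\omega)]$ so small that $C\bigl(\omega(\ell_0)+\omega(2\ell_0)^2\bigr)\le4\varepsilon$ and $C\omega(\ell_0)\le1$, where $C$ is the dimensional constant from Lemma~\ref{lem: Straightening of boundary}(\ref{itm: gradient estimate}) and Lemma~\ref{lem: A geometric lemma}. By compactness of $\partial\Omega$ and the $C^1$ hypothesis there are finitely many balls $B_1,\dots,B_N$, each centred at a point of $\partial\Omega$ and of radius at most $\ell_0$, to which the constructions of Section~\ref{sec: Straightening of bdry} apply and whose union contains a collar $\{x\in\overline\Omega:\textup{dist}(x,\partial\Omega)<2\delta_0\}$ for some $\delta_0=\delta_0(\varepsilon,\Omega)>0$; indeed one may take $2\delta_0=\textup{dist}\bigl(\overline\Omega\setminus\bigcup_i B_i,\partial\Omega\bigr)$. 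Pick $\chi_0,\chi_1,\dots,\chi_N\in C_c^\infty(\mathbb{R}^d)$ with $\sum_{i=0}^N\chi_i^2=1$ on $\overline\Omega$, $\textup{supp}\,\chi_i\subset B_i$ for $i\ge1$, and $\textup{supp}\,\chi_0\subset\{x:\textup{dist}(x,\partial\Omega)>\delta_0\}$, and set $L=\max_i\|\nabla\chi_i\|_{L^\infty}$.

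Next, the IMS localization formula gives
\[
  \int_\Omega|\nabla u|^2\,dx=\sum_{i=0}^N\int_\Omega|\nabla(\chi_i u)|^2\,dx-\int_\Omega\Bigl(\sum_{i=0}^N|\nabla\chi_i|^2\Bigr)|u|^2\,dx\,,
\]
while $\sum_i\chi_i^2=1$ also yields $\sum_i\int_\Omega|\chi_i u|^2\,\textup{dist}(x,\partial\Omega)^{-2}\,dx=\int_\Omega|u|^2\,\textup{dist}(x,\partial\Omega)^{-2}\,dx$. The cross term is already bounded below by $-(N+1)L^2\|u\|^2$, so it remains to bound each localized form $\int_\Omega|\nabla(\chi_i u)|^2-(\tfrac14-\varepsilon)\int_\Omega|\chi_i u|^2\,\textup{dist}(x,\partial\Omega)^{-2}$ from below. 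For $i=0$ the weight satisfies $\textup{dist}(\cdot,\partial\Omega)^{-2}\le\delta_0^{-2}$ on $\textup{supp}\,\chi_0$, so dropping the gradient term yields the lower bound $-\tfrac14\delta_0^{-2}\|u\|^2$. For $i\ge1$, put $w=\chi_i u$ and $\tilde w=w\circ\Phi_i^{-1}$, where $\Phi_i$ is the straightening diffeomorphism of Section~\ref{sec: Straightening of bdry} for $B_i$; then $\tilde w\in H^1_0(\mathbb{R}^d_+\cap\Phi_i(B_i))$, and using Lemma~\ref{lem: Straightening of boundary}(\ref{itm: volume preservation}),(\ref{itm: gradient estimate}), the upper bound of Lemma~\ref{lem: A geometric lemma}, monotonicity of $\omega$, and the half-space Hardy inequality,
\begin{align*}
  &\int_\Omega|\nabla(\chi_i u)|^2\,dx-\Bigl(\tfrac14-\varepsilon\Bigr)\int_\Omega\frac{|\chi_i u|^2}{\textup{dist}(x,\partial\Omega)^2}\,dx\\
  &\qquad\ge\Bigl[\tfrac14\bigl(1-C\omega(\ell_0)\bigr)-\bigl(\tfrac14-\varepsilon\bigr)\bigl(1+C\omega(2\ell_0)^2\bigr)\Bigr]\int_{\mathbb{R}^d_+}\frac{|\tilde w(y)|^2}{y_d^2}\,dy\,,
\end{align*}
and the bracket is $\ge\varepsilon-\tfrac{C}{4}\bigl(\omega(\ell_0)+\omega(2\ell_0)^2\bigr)\ge0$ by the choice of $\ell_0$. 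Summing the pieces proves the lemma with $c_H(\varepsilon,\Omega)=\tfrac14\delta_0^{-2}+(N+1)L^2$.

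The only point requiring genuine care is the order in which the parameters are fixed: $\ell_0$ must be chosen first, depending on $\varepsilon$ and on the modulus of continuity $\omega$ of $\partial\Omega$, and only afterwards are the finite cover, $\delta_0$, the partition of unity, and hence $L$ and $N$ determined — all of which may legitimately depend on $\varepsilon$ and $\Omega$. I also note that $\varepsilon>0$ is used essentially: for $\varepsilon=0$ the bracket above is strictly negative at every admissible scale, reflecting the well-known fact that the borderline inequality with sharp constant $1/4$ and a bounded remainder is considerably more delicate. Everything else is bookkeeping (the existence of the cover and partition of unity, the $C_c^\infty$-to-$H^1_0$ approximation), and none of it presents a real obstacle.
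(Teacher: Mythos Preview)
Your proof is correct and follows precisely the approach the paper sketches in the Remark immediately after the lemma: the paper itself does not give a proof (it simply cites Davies), but notes that a direct proof ``using a partition of unity and appealing to Lemmas~\ref{lem: Straightening of boundary} and~\ref{lem: A geometric lemma}'' is possible and yields a quantitative bound $c_H(\varepsilon,\Omega)\le C/\omega^{-1}(\varepsilon)^2$. Your IMS localization, boundary-straightening, and reduction to the half-space Hardy inequality carry out exactly this sketch, and your care with the order in which $\ell_0$, the cover, and the partition are fixed is appropriate; the resulting constant $c_H=\tfrac14\delta_0^{-2}+(N+1)L^2$ is consistent with the paper's remarked bound once one tracks how $\delta_0$, $N$, and $L$ depend on $\ell_0\sim\omega^{-1}(\varepsilon)$.
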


\begin{rem}
  Lemma~\ref{lem: Davies Hardy ineq} can be proved in a direct manner by using a partition of unity and appealing to Lemmas~\ref{lem: Straightening of boundary} and~\ref{lem: A geometric lemma}. In particular, this allows one to quantify the best constant $c_H$ in terms of the $C^1$-regularity of $\partial\Omega$. Indeed, such a proof yields the bound $c_H(\varepsilon, \Omega) \leq \tfrac{C}{\omega^{-1}(\varepsilon)^2}$ for a constant $C$ depending only on the dimension and $\omega^{-1}$ is the inverse of the $C^1$-modulus of continuity of $\partial \Omega$.
\end{rem}

With Lemma~\ref{lem: Davies Hardy ineq} in hand we move on to the main result of this subsection. Specifically, the following local Hardy--Lieb--Thirring type inequality for $H_{\Omega, b, V}$ (cf.~\cite{FrankLoss_HSM_2012}):

\begin{lem}
\label{lem: local HLT}
  Let $\Omega, b, V$ be as in Theorem~\ref{thm: main result}. Let $\phi \in C^1_0(\mathbb{R}^d)$ be supported in a ball $\overline{B}$ of radius $\ell$ and set $\underline{b} = \inf_{\Omega \cap B} b$. 
  If $0<h\leq K \min\{\ell, c_H(\underline{b}^2/2, \Omega)^{-1/2}\}$, then
  \begin{equation*}
    \textup{Tr}(\phi H_{\Omega, b, V}(h)\phi)_- \leq C \min\{\underline{b}, 1\}^{-d}\ell^dh^{-d} \Bigl(1 + h^2 \|V_-\|^{1+d/2}_{L^{1+d/2}(\Omega \cap B)}\Bigr)\,,
  \end{equation*}
  where the constant $C$ depends only on $d, K,$ and $\|\phi\|_{L^\infty}$. 
\end{lem}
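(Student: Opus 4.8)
The plan is to reduce the bound on $\textup{Tr}(\phi H_{\Omega,b,V}(h)\phi)_-$ to a genuine Hardy--Lieb--Thirring inequality on the whole of $\Omega$, and for that we first need to absorb the localization function $\phi$ and the negative part $V_-$ into a clean operator inequality. Since $\phi$ is supported in $\overline B$, only the restriction of $b$ and $V$ to $\Omega\cap B$ matters, and the key point is that on this set $b^2(x)-\tfrac14\ge \underline b^2-\tfrac14$. Splitting $\underline b^2-\tfrac14=\tfrac12(\underline b^2-\tfrac14)+\tfrac12\underline b^2-\tfrac18$, the first half of the singular term combined with the gradient term is controlled from below using Davies' inequality (Lemma~\ref{lem: Davies Hardy ineq}) with $\varepsilon=\underline b^2/2$: for $u=\phi v$ with $v$ in the form domain,
\begin{equation*}
  \int_\Omega |\nabla(\phi v)|^2\,dx + \tfrac12\Bigl(\underline b^2-\tfrac14\Bigr)\int_\Omega \frac{|\phi v|^2}{\textup{dist}(x,\partial\Omega)^2}\,dx \ge -c_H(\underline b^2/2,\Omega)\int_\Omega |\phi v|^2\,dx\,.
\end{equation*}
This turns the localized form into something bounded below by $\tfrac12\|\nabla(\phi v)\|^2 + (\tfrac12\underline b^2-\tfrac18)h^2\int \frac{|\phi v|^2}{\textup{dist}^2} + h^2\int V|\phi v|^2 - (1+c_H h^2)\int|\phi v|^2$, where I have kept half the gradient in reserve.

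Next I would handle $V_-$. Writing $V\ge -V_-$ and using the sharp (subcritical) Lieb--Thirring / Sobolev-type bound, one absorbs $-h^2 V_-$ into the reserved half of the gradient term at the cost of an additive term: there is a constant $C_d$ with
\begin{equation*}
  \tfrac14 h^2\|\nabla w\|_{L^2(\mathbb R^d)}^2 - h^2\int V_- |w|^2\,dx \ge -C_d h^2 \|V_-\|_{L^{1+d/2}}^{1+d/2} h^{-d}\int|w|^2\,dx
\end{equation*}
by Hölder and Sobolev applied after scaling; more precisely one uses that $\int V_-|w|^2\le \|V_-\|_{L^{1+d/2}}\|w\|_{L^{2(d+2)/d}}^2\le \|V_-\|_{L^{1+d/2}} S_d \|\nabla w\|_2^{2d/(d+2)}\|w\|_2^{4/(d+2)}$ and Young's inequality. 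The upshot is the operator inequality, after extending $\phi v$ by zero to all of $\mathbb R^d$ and only keeping the remaining quarter of the gradient and the full singular term,
\begin{equation*}
  \phi H_{\Omega,b,V}(h)\phi \ge \tfrac14\bigl(-h^2\Delta\bigr) + h^2\Bigl(\tfrac12\underline b^2-\tfrac18\Bigr)\frac{\mathds 1}{\textup{dist}(x,\partial\Omega)^2} - \Bigl(1 + C h^2 + C h^{2-d}\|V_-\|_{L^{1+d/2}(\Omega\cap B)}^{1+d/2}\Bigr)\phi^2\,.
\end{equation*}
Here I have used $\phi^2\le \|\phi\|_{L^\infty}^2\mathds 1_B$ and the hypothesis $h\le K c_H(\underline b^2/2,\Omega)^{-1/2}$ to bound the $c_H h^2$ contribution by a constant. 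Writing $\underline\beta^2:=\tfrac12\underline b^2-\tfrac18$ — which, inconveniently, may be negative; to avoid this I would instead split $\underline b^2-\tfrac14$ more carefully, keeping a fraction $\theta$ of it in the Davies step and $(1-\theta)$ for the final operator, choosing $\theta$ so that $(1-\theta)(\underline b^2-\tfrac14)>-\tfrac14+\delta$ for a fixed $\delta$ when $\underline b$ is small, i.e. keeping most of the deficit to fight and giving up almost nothing to Davies is impossible, so instead I keep the operator coefficient bounded below by, say, $-\tfrac18$ and note $c_H$ still controls it since $-\tfrac18>-\tfrac14$; the cleanest route is to split so that the residual singular coefficient is exactly $-\tfrac18$ and the Davies term carries coefficient $b^2-\tfrac18\ge \underline b^2-\tfrac18$, applying Lemma~\ref{lem: Davies Hardy ineq} with $\varepsilon=\underline b^2$ shifted appropriately.

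Finally, since $\tfrac14(-h^2\Delta) - \tfrac18 h^2\,\textup{dist}(x,\partial\Omega)^{-2}\ge 0$ on $H^1_0(\Omega)$ by Hardy's inequality ($\tfrac14>\tfrac18$), the right-hand side above is bounded below by $-\mu\,\phi^2$ with $\mu = C(1 + h^{2-d}\|V_-\|_{L^{1+d/2}(\Omega\cap B)}^{1+d/2})$. But a bare lower bound gives no trace estimate, so instead I keep a further small piece of the Laplacian: write $\tfrac14(-h^2\Delta)=\tfrac18(-h^2\Delta)+\tfrac18(-h^2\Delta)$, use the first eighth against the singular term via Hardy, and keep the second. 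Then
\begin{equation*}
  \textup{Tr}(\phi H_{\Omega,b,V}(h)\phi)_- \le \textup{Tr}\Bigl(\tfrac18(-h^2\Delta) - \mu\,\mathds 1_{B}\|\phi\|_{L^\infty}^2\Bigr)_-\,,
\end{equation*}
and the right-hand side is estimated by the classical Berezin--Li--Yau / Lieb--Thirring bound for $-h^2\Delta$ on $\mathbb R^d$ with a constant potential $-\mu\|\phi\|_\infty^2$ on the ball $B$: it equals $C_d h^{-d}\mu^{1+d/2}\|\phi\|_\infty^{d+2}|B| $ up to a constant. Since $|B|=C\ell^d$ and $\mu^{1+d/2}\le C(1+ h^{(2-d)(1+d/2)}\|V_-\|^{(1+d/2)^2})$ — this overshoots; the correct bookkeeping is to not raise $\|V_-\|$ to a higher power, which one achieves by instead running the $V_-$ absorption directly inside the Lieb--Thirring trace bound: use the Hardy--Lieb--Thirring inequality $\textup{Tr}(\tfrac18(-h^2\Delta)-\tfrac1{16}h^2\textup{dist}^{-2}+W)_-\le C h^{-d}\int (W_- + \ldots)$ together with $\int |\phi|^2(-V_- \cdot h^2)\ldots$. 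The cleanest statement to invoke is the standard Lieb--Thirring bound $\textup{Tr}(-h^2\Delta - U)_- \le L_d^{\rm cl}(1+\epsilon) h^{-d}\int U_+^{1+d/2}$ for $d\ge1$ applied to $U = \mu_0\mathds1_B + c_V h^2 V_-$ with $\mu_0 = C(1+h^2)$; then $\int U_+^{1+d/2}\le C(\mu_0^{1+d/2}|B| + h^{2+d}\|V_-\|_{L^{1+d/2}(\Omega\cap B)}^{1+d/2})$, giving the claimed $\ell^d h^{-d}(1+h^2\|V_-\|_{L^{1+d/2}(\Omega\cap B)}^{1+d/2})$ after noting $h\le K\ell$ makes $\mu_0^{1+d/2}|B|h^{-d}\le C\ell^d h^{-d}$ and $h^{2+d}\cdot h^{-d}=h^2$.

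The main obstacle is the bookkeeping in the splitting of the Hardy/singular term so that (i) Davies' inequality applies with $\varepsilon$ comparable to $\underline b^2$ (this forces the $h\le K c_H(\underline b^2/2,\Omega)^{-1/2}$ hypothesis and explains why the constant $c_H$ enters only through that threshold), (ii) enough of the Laplacian survives to dominate the leftover singular term via the classical Hardy inequality on $H^1_0(\Omega)$, and (iii) the dependence on $\underline b$ comes out as the stated $\min\{\underline b,1\}^{-d}$ — this last power arises because, when $\underline b<1$, the effective kinetic coefficient available after feeding part of it to Davies scales like $\underline b^2$, and rescaling $-\underline b^2 h^2\Delta$ to $-h^2\Delta$ costs a factor $\underline b^{-d}$ in the phase-space volume. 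Keeping these three accountings consistent with a single small parameter split is the delicate part; everything else is the standard semiclassical Lieb--Thirring estimate for a Schrödinger operator with bounded-plus-$L^{1+d/2}$ potential, combined with Hardy's inequality to discard the singular term.
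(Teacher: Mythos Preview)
Your overall strategy --- peel off the singular term via Davies' Hardy inequality, keep a fraction of the Laplacian in reserve, and then finish with Berezin--Li--Yau plus Lieb--Thirring --- is exactly the paper's. What is missing is a clean resolution of the splitting: you cycle through several additive decompositions of the coefficient $\underline b^2-\tfrac14$, run into sign problems and an overshoot on the $V_-$ power, and end with a description of the obstacle rather than an argument that settles it.

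The paper avoids all of this by splitting the Laplacian \emph{multiplicatively} rather than the singular coefficient additively. Write $-\Delta = -\delta\Delta -(1-\delta)\Delta$ with $\delta\in(0,\tfrac12]$, and pair the entire singular term with the second piece:
\[
\phi H_{\Omega,b,V}(h)\phi \ \ge\ \phi\Bigl(-h^2\delta\Delta - h^2 V_- -1\Bigr)\phi
\ -\ h^2(1-\delta)\,\phi\Bigl(-\Delta + \tfrac{1}{1-\delta}\bigl(\underline b^2-\tfrac14\bigr)\textup{dist}(\,\cdot\,,\partial\Omega)^{-2}\Bigr)\phi\,.
\]
A one-line check gives $(1-\delta)^{-1}(\underline b^2-\tfrac14)>\underline b^2-\tfrac{\delta}{2}-\tfrac14$ for $\delta\le\tfrac12$, so choosing $\delta=\min\{\underline b^2,\tfrac12\}$ lets Lemma~\ref{lem: Davies Hardy ineq} with $\varepsilon=\underline b^2/2$ absorb the whole bracketed operator into $-c_H(\underline b^2/2,\Omega)$. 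No residual $-\tfrac18$ singular term survives, and the $\min\{\underline b,1\}^{-d}$ factor falls out immediately from $\delta^{-d/2}$.

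For $V_-$ the paper does not try to absorb it into the kinetic term via Sobolev and Young (your first attempt, which indeed overshoots the exponent). Instead it does a \emph{second} multiplicative split $-\delta\Delta=-\delta(1-\rho)\Delta-\delta\rho\Delta$ at the level of the trace: Berezin--Li--Yau on $\phi(-h^2\delta(1-\rho)\Delta - c_0 h^2 -1)\phi$ gives the $\ell^d h^{-d}$ term, and Lieb--Thirring on $\phi(-h^2\delta\rho\Delta - h^2 V_-)\phi$ gives $C h^2 \delta^{-d/2}\rho^{-d/2}\|V_-\|_{L^{1+d/2}}^{1+d/2}$ with the correct power. Setting $\rho=h^2/(2K^2\ell^2)$ finishes. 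This two-step split is what you arrive at in your final paragraph, but only after the detours; writing it this way from the start makes the bookkeeping you flag as ``the delicate part'' essentially disappear.
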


\begin{proof}[Proof of Lemma~\ref{lem: local HLT}]
  By assumption, $\underline{b}>0$.
  By the variational principle and for any $\delta \in (0, 1/2]$, we find 
  \begin{align*}
    \phi H_{\Omega, b, V}(h) \phi &\geq
    \phi\Bigl(-h^2 \delta \Delta  - h^2 V_-(x) -1\\
    &\quad 
    - h^2(1-\delta)\Bigl(-\Delta + (1-\delta)^{-1}\Bigl(\underline{b}^2- \frac{1}{4}\Bigr) \frac{1}{\mathrm{dist}(x, \partial\Omega)^2}\Bigr) \Bigr)\phi\,.
  \end{align*}
  Since $\delta \in (0, 1/2]$ we have
  \begin{equation*}
    (1-\delta)^{-1}\Bigl(\underline{b}^2- \frac{1}{4}\Bigr) \geq (1+2\delta)\Bigl(\underline{b}^2- \frac{1}{4}\Bigr) > \underline{b}^2 - \frac{\delta}{2} - \frac{1}{4}\,.
  \end{equation*}
  Thus, setting $\delta = \min\{\underline{b}^2, 1/2\} \leq 1/2$, Lemma~\ref{lem: Davies Hardy ineq} implies with $c_0 = c_H(\underline{b}^2/2, \Omega)$ that
  \begin{equation}\label{eq: lower bound by Hardy}
    \phi H_{\Omega, b, V}(h)\phi \geq
    \phi(-h^2 \delta \Delta - c_0h^2
     - h^2 V_-(x) -1) \phi\,.
  \end{equation}
  Consequently, for any $0<\rho< 1$, the variational principle and~\eqref{eq: lower bound by Hardy} yields
  \begin{align*}
    \mathrm{Tr}(\phi H_{\Omega, b, V}(h)\phi)_-
    &\leq \mathrm{Tr}(\phi(-h^2\delta(1-\rho)\Delta -c_0h^2-1)\phi)_-\\
    &\quad +
    \mathrm{Tr}(\phi(-h^2\delta\rho\Delta - h^2V_-)\phi)_-\,.
  \end{align*}
  Using the Berezin--Li--Yau inequality
  \begin{align*}
    \mathrm{Tr}(\phi(-h^2\delta(1-\rho)&\Delta -c_0h^2-1)\phi)_- \\
    &\leq C (1+c_0h^2)^{1+d/2}(1-\rho)^{-d/2}\delta^{-d/2}h^{-d}\ell^{d}\,,
  \end{align*}
  with $C>0$ depending on $d$ and $\|\phi\|_{L^\infty}$. For the remaining term the Lieb--Thirring inequality implies
  \begin{equation*}
     \mathrm{Tr}(\phi(-h^2\delta\rho\Delta - h^2V_-)\phi)_- \leq C h^2\delta^{-d/2}\rho^{-d/2}\|V_-\|_{L^{1+d/2}(\Omega \cap B_\ell)}^{1+d/2}\,,
   \end{equation*} 
   for some $C>0$ depending only on $d$. Gathering the estimates and setting $\rho = h^2/(2K^2\ell^2)< 1$ completes the proof.
\end{proof}

\section{Local asymptotics}\label{sec: Local asymptotics}
\subsection{Local asymptotics in the bulk}

\begin{lem}
\label{lem: local asymptotics bulk}
  Let $\phi \in C^1_0(\mathbb{R}^d)$ be supported in a ball $\overline{B}$ of radius $\ell>0$ and satisfy
  \begin{equation}\label{eq: phi grad bound bulk asymptotics}
    \|\nabla \phi\|_{L^\infty(\mathbb{R}^d)}\leq M \ell^{-1}\,.
  \end{equation}
  If $V\in L^1(B)$ is such that $V_- = V_0+V_1$ with $0\leq V_0 \in L^\infty(B)$ and $V_1 \in L^{1+d/2}(B)$ then, for $0<h\leq K \min\{\ell, \|V_0\|_\infty^{-1/2}\}$,
  \begin{align*}
    \Bigl|\textup{Tr}(\phi (-h^2&\Delta+h^2V-1)\phi)_- - L_d h^{-d}\int_B \phi^2(x)\,dx \Bigr|\\
    & \leq
    C h^{-d+2}\Bigl[\ell^{d-2}+\ell^d\|V_0\|_{L^\infty(B)} + \ell^d\|V_1\|^{1+d/2}_{L^{1+d/2}(B)}+\|V_+\|_{L^1(B)}\Bigr]\,,
  \end{align*}
  where the constant $C$ depends only on $d, M, K$.
\end{lem}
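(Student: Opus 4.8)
The plan is to reduce the bound on $\textup{Tr}(\phi(-h^2\Delta + h^2V - 1)\phi)_-$ to the corresponding estimate for the free operator $-h^2\Delta - 1$ on $\mathbb{R}^d$ (localized by $\phi$), for which the two-term content is captured by the classical Berezin–Li–Yau inequality and its complement (the Berezin inequality). Concretely, I would sandwich the operator between two comparison operators using the variational principle: for the upper bound on the trace, split off the potential as $h^2 V \geq -h^2 V_- = -h^2 V_0 - h^2 V_1$ and use operator monotonicity together with a small kinetic-energy sacrifice, writing $-h^2\Delta + h^2V - 1 \geq -h^2(1-\rho)\Delta - h^2 V_0 - 1 + (-h^2\rho\Delta - h^2 V_1)$ for a parameter $\rho\in(0,1)$ to be optimized; since $\|V_0\|_\infty^{1/2} h \leq K$, the term $h^2 V_0$ is a bounded perturbation of the constant $1$, contributing an $O(h^{-d}\ell^d\|V_0\|_\infty \cdot h^2)$ correction after expanding $(1 + h^2\|V_0\|_\infty)^{1+d/2}$, and the $V_1$ term is handled by the Lieb–Thirring inequality, contributing $O(h^2 \rho^{-d/2}\|V_1\|_{L^{1+d/2}}^{1+d/2})$. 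For the lower bound on the trace one argues symmetrically, bounding $h^2 V \leq h^2 V_+$ and using that $\textup{Tr}(\phi(-h^2\Delta + h^2 V_+ - 1)\phi)_- \geq \textup{Tr}(\phi(-h^2\Delta - 1)\phi)_- - h^2\|\phi\|_\infty^2\|V_+\|_{L^1(B)}$, which follows from the min-max principle and the fact that adding the nonnegative multiplication operator $h^2 V_+$ only raises eigenvalues, while the $L^1$ bound controls the total shift of the negative part via $\sum_j (\lambda_j - h^2 V_+ \text{ contribution})$; more precisely I would use that $(a-b)_- \leq a_- + b$ for the quadratic forms evaluated on the eigenfunctions of the cut-off operator.

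The heart of the matter is then the two-sided estimate
\begin{equation*}
  \Bigl| \textup{Tr}(\phi(-h^2\Delta - 1)\phi)_- - L_d h^{-d}\int_B \phi^2\,dx \Bigr| \leq C h^{-d+2}\ell^{d-2}\,,
\end{equation*}
which is where the gradient bound~\eqref{eq: phi grad bound bulk asymptotics} enters. The upper bound here is exactly Berezin–Li–Yau applied to $\phi(-h^2\Delta - 1)\phi$, giving $\textup{Tr}(\phi(-h^2\Delta-1)\phi)_- \leq L_d h^{-d}\int \phi^2$ with no error at all (this is the clean direction). The lower bound is obtained by inserting into the quadratic form for $-h^2\Delta - 1$ a family of trial functions of the form $\phi(x) e^{i\xi\cdot x/h}$, i.e. the IMS-type commutator identity $\phi(-h^2\Delta)\phi = -h^2\Delta\phi^2 \ \text{(as forms, roughly)} + h^2|\nabla\phi|^2$, so that the localization error is precisely $h^2\int|\nabla\phi|^2 \leq h^2 M^2\ell^{-2}|B| \leq C h^2 M^2 \ell^{d-2}$. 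Evaluating the Berezin-type lower bound $\textup{Tr}(\phi(-h^2\Delta-1)\phi)_- \geq (2\pi h)^{-d}\iint_{|\xi|^2 < 1} \phi(x)^2\,(1-|\xi|^2 - h^2|\nabla\phi(x)|^2/\phi(x)^2)\,d\xi\,dx$ — better, running the coherent-state argument with the $|\nabla\phi|^2$ error absorbed globally — yields $L_d h^{-d}\int\phi^2 - C h^{-d+2}\ell^{d-2}$.

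Assembling the pieces: choosing $\rho \asymp h^2/\ell^2$ (so that $\rho^{-d/2} h^2 \asymp h^{-d+2}\ell^d$, consistent with the other terms after multiplying by $\ell^{-d}$... ) — actually I would simply take $\rho$ a fixed constant such as $1/2$, since $\rho^{-d/2}$ is then harmless and the $V_1$ term already has the right form $h^2\|V_1\|_{L^{1+d/2}}^{1+d/2}$, which is $\leq h^{-d+2}\ell^d \|V_1\|_{L^{1+d/2}}^{1+d/2}$ as required. Expanding $(1 + c_0 h^2)^{1+d/2}$ type factors using $h \leq K\ell$ and $h^2\|V_0\|_\infty \leq K^2$ produces exactly the advertised terms $h^{-d+2}\ell^{d-2}$ (main term error, from localization), $h^{-d+2}\ell^d\|V_0\|_\infty$, $h^{-d+2}\ell^d\|V_1\|_{L^{1+d/2}}^{1+d/2}$, and $h^{-d+2}\|V_+\|_{L^1}$. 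The main obstacle, and the only place requiring genuine care rather than bookkeeping, is the complementary (lower) bound on $\textup{Tr}(\phi(-h^2\Delta-1)\phi)_-$ with the sharp $\ell^{d-2}$ dependence: one must run the coherent-state / Berezin argument keeping track of the localization error $h^2\|\nabla\phi\|_\infty^2 |B|$ rather than discarding it, and verify that no worse powers of $\ell$ creep in — this is standard (it is the bulk estimate of \cite{FrankGeisinger_11}) but must be done with the potential already removed, which is why the splitting into $V_0$ and $V_1$ above is arranged so that the $V$-free operator is reached cleanly on both sides.
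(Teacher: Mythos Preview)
Your overall architecture matches the paper's: lower bound via a coherent-state trial operator, upper bound via the splitting
\[
  -h^2\Delta + h^2V - 1 \ \geq\ -h^2(1-\rho)\Delta - h^2 V_0 - 1 \ +\ \bigl(-h^2\rho\Delta - h^2 V_1\bigr),
\]
followed by Berezin--Li--Yau on the first piece and Lieb--Thirring on the second. But the upper bound breaks at the point where you revise your choice of $\rho$ from $\rho\asymp h^2/\ell^2$ to the fixed value $\rho=1/2$. With $\rho=1/2$, Berezin--Li--Yau applied to the first piece yields
\[
  \textup{Tr}\bigl(\phi(-\tfrac{h^2}{2}\Delta - h^2V_0 - 1)\phi\bigr)_- \ \leq\ L_d\,(h/\sqrt{2})^{-d}\,(1+h^2\|V_0\|_\infty)^{1+d/2}\!\int_B\phi^2
  = 2^{d/2}L_d h^{-d}\!\int_B\phi^2 + O(h^{-d+2}\ell^d\|V_0\|_\infty),
\]
so the leading term acquires the unwanted factor $2^{d/2}$. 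The discrepancy $(2^{d/2}-1)L_dh^{-d}\int_B\phi^2$ is of order $h^{-d}\ell^d$, which is \emph{not} controlled by $h^{-d+2}\ell^{d-2}$ in the semiclassical regime $h\ll\ell$. Your first instinct was correct: one must take $\rho\asymp h^2/\ell^2$, so that $(1-\rho)^{-d/2}-1=O(h^2/\ell^2)$ contributes an error $O(h^{-d+2}\ell^{d-2})$, while $\rho^{-d/2}h^2\asymp h^{-d+2}\ell^d$ is exactly the prefactor appearing in front of $\|V_1\|_{L^{1+d/2}}^{1+d/2}$. The paper makes precisely this choice, $\rho=h^2/(2K^2\ell^2)$.

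There is a second, smaller issue in the lower bound. The inequality you write,
\[
  \textup{Tr}\bigl(\phi(-h^2\Delta + h^2V_+ - 1)\phi\bigr)_- \ \geq\ \textup{Tr}\bigl(\phi(-h^2\Delta - 1)\phi\bigr)_- - h^2\|\phi\|_\infty^2\|V_+\|_{L^1(B)},
\]
does not follow from min--max as you suggest: adding $h^2V_+\phi^2$ raises each eigenvalue, but the total drop in $\sum(\lambda_j)_-$ is not bounded by $\textup{Tr}(h^2V_+\phi^2)$ (which is not even finite in general). The coherent-state argument you mention afterwards is what actually works, but it must be applied directly to $H_{V_+}$ rather than to $H_0$ separately: plugging the trial kernel $\gamma(x,y)=(2\pi)^{-d}\chi(x)\int_{|\xi|<h^{-1}}e^{i\xi\cdot(x-y)}\,d\xi\,\chi(y)$ into $-\textup{Tr}(\gamma\phi H_{V_+}\phi)$ produces the $V_+$ contribution with the correct prefactor $Ch^{-d+2}\|V_+\phi^2\|_{L^1}$, not $h^2\|V_+\|_{L^1}$. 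This is the scaling in the statement of the lemma.
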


\begin{proof}
Throughout the proof we set $H_V = H_{\mathbb R^d,0,V}=-h^2\Delta+ h^2V-1$ in $L^2(\mathbb R^d)$.

To prove the lower bound, consider the operator $\gamma$ with integral kernel
\begin{equation*}
   \gamma(x, y) = \frac{1}{(2\pi)^d}\chi(x)\int_{|\xi|< h^{-1}}e^{i \xi(x-y)}\,d\xi\,\chi(y)\,,
\end{equation*} 
where $\chi \in C_0^\infty(\mathbb{R}^d)$ with $0\leq \chi \leq 1$ and $\chi \equiv 1$ on $B$. The operator $\gamma$ is trace class and satisfies $0\leq \gamma \leq \mathbf{1}$. Therefore, the variational principle implies that
\begin{align*}
  \textup{Tr}(\phi H_V \phi)_-
  &\geq
  \textup{Tr}(\phi H_{V_+}\phi)_-\\
  &\geq
  -\textup{Tr}(\gamma \phi H_{V_+}\phi)\\
  &=
  - \frac{1}{(2\pi)^d}\int_{|\xi|< h^{-1}} \Bigl(h^2 \|\nabla e^{i\xi\, \cdot \,}\phi\|_{L^2(\mathbb{R}^d)}^2+h^2\|V_+ \phi^2\|_{L^1(\mathbb{R}^d)}-\|\phi\|_{L^2(\mathbb{R}^d)}^2\Bigr)\,d\xi\\
  &=
  L_d h^{-d}\int_B \phi^2(x)\,dx
  - C h^{-d+2}\Bigl(\|\nabla \phi\|_{L^2(\mathbb{R}^d)}^2+\|V_+ \phi^2\|_{L^1(\mathbb{R}^d)}\Bigr)\,.
\end{align*}
Since, by~\eqref{eq: phi grad bound bulk asymptotics}, $\|\phi\|_{L^\infty}\leq M$ and $\|\nabla \phi\|_{L^2(\mathbb{R}^d)}^2 \leq C \ell^{d-2}$ this proves the lower bound.

It remains to prove the upper bound.
For any $0<\rho \leq 1/2$ 
\begin{align*}
  \textup{Tr}(\phi H_{V}\phi)_- 
  &\leq \textup{Tr}(\phi H_{V_-}\phi)_-\\
  &\leq 
  \textup{Tr}(\phi (-h^2(1-\rho)\Delta-h^2V_0-1)\phi)_- + h^2\textup{Tr}(\phi(-\rho\Delta-V_1)\phi)_-\,.
\end{align*}
To bound the second term we apply the Lieb--Thirring inequality to conclude that
\begin{align*}
  h^2\textup{Tr}(\phi(-\rho\Delta-V_1)\phi)_-
  &\leq
  h^2\textup{Tr}(\phi(-\rho\Delta-V_1\mathrm{1}_B)_-\phi)
  \leq
  Ch^2\rho^{-d/2} \int_{B}|V_1(x)|^{1+d/2}\,dx\,,
\end{align*}
where we again used $\|\phi\|_{L^\infty}\leq M$. Since $V_0\in L^\infty(B)$, we can bound
\begin{align*}
  \textup{Tr}(\phi (-h^2(1-\rho)\Delta-h^2V_0-1)\phi)_-
  &\leq
  \textup{Tr}(\phi(-h^2(1-\rho)\Delta - h^2 \sup_{B}V_0-1)\phi)_-\\
  &=
  (1+h^2\sup_B V_0) \textup{Tr}(\phi (-\tilde h^2\Delta-1)\phi)_-
\end{align*}
with $\tilde h = h(1-\rho)^{1/2}(1+h^2\sup_B V_0)^{-1/2}$.
By the Berezin--Li--Yau inequality,
\begin{equation*}
  \textup{Tr}(\phi (-\tilde h^2\Delta-1)\phi)_-\leq  L_d \tilde h^{-d}\int_B \phi^2(x)\,dx\,.
\end{equation*}
Combining the above we have arrived at
\begin{align*}
  \textup{Tr}(\phi H_{V}\phi)_-
  &\leq
  L_d h^{-d}\int_B \phi^2(x)\,dx +
  C h^2 \rho^{-d/2}\int_B |V_1(x)|^{1+d/2}\,dx\\
  &\quad +
  L_d h^{-d}\Bigl[(1-\rho)^{-d/2}(1+h^2\sup_B V_0)^{1+d/2}-1\Bigr]\int_B \phi^2(x)\,dx \\
  &\leq 
  L_d h^{-d}\int_B \phi^2(x)\,dx
  +
  C h^2 \rho^{-d/2}\int_B |V_1(x)|^{1+d/2}\,dx\\
  &\quad +
  C h^{-d}\Bigl[\rho+h^2\sup_B V_0\Bigr]\int_B \phi^2(x)\,dx\,,
\end{align*}
where $C$ depend only on $d, K, M$. Setting $\rho = h^2/(2K^2\ell^2)\leq 1/2$ and using $\int \phi^2 \leq  C \ell^d$ completes the proof.
\end{proof}

\subsection{Local asymptotics near the boundary}

In this section we prove the following local asymptotic expansion close to the boundary:
\begin{thm}\label{thm: local asymptotics boundary}
	Let $\Omega, b, V$ be as in Theorem~\ref{thm: main result}. Let $\phi \in C^1_0(\mathbb{R}^d)$ be supported in a ball $\overline{B}$ of radius $\ell$ and satisfy
  \begin{equation*}
    \|\nabla \phi\|_{L^\infty(\mathbb{R}^d)}\leq M \ell^{-1}\,.
  \end{equation*} 
  Assume that $\textup{dist}(B, \partial\Omega)\leq 2\ell$, and set $\underline{b} = \inf_{B\cap \Omega} b$. For $0<\ell\leq c(\Omega, \underline{b})$ and $0<h\leq K\ell$,
  \begin{align*} 
    \biggl|\textup{Tr}&(\phi H_{\Omega, b, V}(h)\phi)_- - L_dh^{-d}\int_\Omega \phi^2(x)\,dx + \frac{L_{d-1}}{2}h^{-d+1}\int_{\partial\Omega}\phi^2(x)b(x)\,d\mathcal{H}^{d-1}(x)\biggr|\\
    &\leq  \ell^dh^{-d}o_{\ell\to 0^+}(1) 
    + O(h^{-d+1})\int_{\partial\Omega}\phi^2(x)\biggl[\sup_{y\in B_{2\ell}(x)}b(y)-\inf_{y\in B_{2\ell}(x)}b(y)\biggr]\,d\mathcal{H}^{d-1}(x)\\
    &\quad
    + O(h^{-d+2})\Bigl(\ell^{d-2}\log(\ell/h)+\|V_+\|_{L^1(\Omega \cap B)}+\ell^d \|V_-\|^{1+d/2}_{L^{1+d/2}(\Omega \cap B)}\Bigr)
  \end{align*}
  Moreover, the error terms and the implicit constants can be quantified in terms of the $C^1$-regularity of $\partial\Omega$ and $M, K, \|b\|_{L^\infty(\Omega\cap B)}, \underline{b}$.
\end{thm}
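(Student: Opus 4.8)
The plan is to reduce the problem near the boundary to an explicit half-space model via the straightening diffeomorphism $\Phi$ of Section~\ref{sec: Straightening of bdry}, and then to diagonalize the resulting effective one-dimensional operator in the $y_d$-variable. First I would use Lemma~\ref{lem: Straightening of boundary} and Lemma~\ref{lem: A geometric lemma} to transfer the quadratic form of $\phi H_{\Omega,b,V}(h)\phi$ to $\mathbb{R}^d_+$: the volume and boundary integrals are preserved up to errors of relative size $\omega(\ell)$ and $\omega(\ell)^2$ respectively, the Dirichlet energy changes by a multiplicative factor $1+O(\omega(\ell))$, and crucially the singular potential $\mathrm{dist}(x,\partial\Omega)^{-2}$ is controlled by $\mathrm{dist}(\Phi(x),\partial\mathbb{R}^d_+)^{-2}$ up to the same relative error. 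Since $\underline b>0$, one can absorb the $O(\omega(\ell))$ distortions of the Hardy term into a slightly perturbed coefficient $b^2(1+O(\omega(\ell)))-\tfrac14$, again using Davies' inequality (Lemma~\ref{lem: Davies Hardy ineq}) to keep everything bounded below; the bulk potential $V$ only contributes through $\|V_+\|_{L^1}$ and a Lieb--Thirring term $\|V_-\|^{1+d/2}_{L^{1+d/2}}$, exactly the last error line, and can essentially be discarded for the leading two terms by the variational principle together with Lemma~\ref{lem: local HLT} and Lemma~\ref{lem: local asymptotics bulk}.

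Next I would replace the now nearly-constant coefficient $b(y)$ by its (almost everywhere defined) boundary trace. Because of assumption~\eqref{eq: b regularity}, on the ball $B$ the oscillation of $b$ is of size $\sup_{B_{2\ell}}b-\inf_{B_{2\ell}}b$, and replacing $b$ by a constant $b_0\in[\underline b,\sup_B b]$ costs precisely the second error term (the $O(h^{-d+1})\int_{\partial\Omega}\phi^2(\sup b-\inf b)$ line) — monotonicity of eigenvalues in the potential makes this a two-sided bound. After this reduction the operator on $\mathbb{R}^d_+$ is, up to the harmless factor $1+O(\omega(\ell))$ in front of the Laplacian and the negligible bulk potential, the model operator $-h^2\Delta + h^2(b_0^2-\tfrac14)y_d^{-2} - 1$ with Dirichlet conditions, localized by $\tilde\phi^2$. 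Separating variables, the transverse operator $-h^2\partial_{y_d}^2 + h^2(b_0^2-\tfrac14)y_d^{-2}$ on $L^2(\mathbb{R}_+)$ is the classical Bessel-type operator whose spectral density is explicitly known; its contribution to $\mathrm{Tr}(\,\cdot\,)_-$ produces the Weyl term $L_d h^{-d}\int\tilde\phi^2$ plus the boundary correction $-\tfrac{L_{d-1}}{2}h^{-d+1}b_0\int_{\partial\mathbb{R}^d_+}\tilde\phi^2$, with remainder $O(h^{-d+2}\ell^{d-2}\log(\ell/h))$ — the logarithm being the familiar artifact of the slowly-decaying $y_d^{-2}$ tail when one cuts the integration in $y_d$ at scale $\ell$. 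This computation, the heart of the matter, is where I would be most careful: one needs a sharp evaluation of $\int_0^\infty\!\!\int_{\mathbb{R}}(\,\xi^2 + (b_0^2-\tfrac14)t^{-2} - 1\,)_- \,\rho_{b_0}(t)\,dt\,d\xi$ against the explicit generalized eigenfunctions (Bessel functions $J_{b_0}$), matching the constant to $L_{d-1}/2$ and controlling the tail contribution for $y_d\gtrsim\ell$ where the localization $\tilde\phi$ cuts off.

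The main obstacle I anticipate is the interplay between the three cutoff scales — the localization length $\ell$, the semiclassical parameter $h$, and the distance scale on which $b$ and the boundary curvature are effectively constant — while maintaining a genuinely two-sided estimate. Concretely: the straightening error in the Hardy term, $C\omega(2\ell)^2\mathrm{dist}(\Phi(x),\partial\mathbb{R}^d_+)^{-2}$, is itself singular and must be reabsorbed using the positivity margin coming from $\underline b>0$ and Davies' inequality, but doing so uniformly in $\ell$ (so that the resulting error is genuinely $\ell^d h^{-d}o_{\ell\to0}(1)$ rather than something with a worse $\ell$-dependence) requires care — this is why the statement restricts to $0<\ell\le c(\Omega,\underline b)$ and why the bound is quantified in terms of $\underline b$. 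A secondary technical point is handling the region of $B$ near $\partial B$ where $\Phi$ maps points to $\mathbb{R}^d_+$ but the ball $B_{2\ell}$ parametrization of $\partial\Omega$ is needed; here one uses that $\phi$ is supported in the smaller ball $\overline B$ and that $\mathrm{dist}(B,\partial\Omega)\le 2\ell$ to ensure all relevant points stay inside the chart.

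For the matching lower bound one runs the same argument in reverse, building a trial density operator on $\Omega$ by pulling back (via $\Phi^{-1}$) the optimizing operator for the half-space model — a coherent-state-type construction in the tangential variables combined with the explicit Bessel eigenfunctions in $y_d$ — and verifying using parts~\eqref{itm: volume preservation}--\eqref{itm: gradient estimate} of Lemma~\ref{lem: Straightening of boundary} that its energy is within the claimed error of the model energy; the bulk potential $V_+$ contributes only the $\|V_+\|_{L^1}$ term by the inequality $\mathrm{Tr}(\phi H_{\Omega,b,V}\phi)_-\ge -\mathrm{Tr}(\gamma\phi H_{\Omega,b,V_+}\phi)$ as in the proof of Lemma~\ref{lem: local asymptotics bulk}. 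Combining the upper and lower bounds, collecting the $\omega(\ell)$-type distortions into the $\ell^dh^{-d}o_{\ell\to0^+}(1)$ term, the oscillation of $b$ into the second term, and the potential and log-corrections into the last term, yields the asserted inequality; all constants have been tracked through $\omega$, $M$, $K$, $\|b\|_{L^\infty(\Omega\cap B)}$ and $\underline b$, giving the final quantitative remark.
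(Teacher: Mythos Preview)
Your proposal is correct and follows essentially the same approach as the paper: the paper likewise factors the argument into a reduction-to-half-space step (Lemma~\ref{lem: reduction to halfspace}, using Lemmas~\ref{lem: Straightening of boundary}, \ref{lem: A geometric lemma}, and the local Hardy--Lieb--Thirring inequality of Lemma~\ref{lem: local HLT} to absorb the $\omega(\ell)$-distortions as a $\rho\ell^dh^{-d}$ error with $\rho\to0$) followed by an explicit half-space computation (Lemma~\ref{lem: halfspace asymptotics}, diagonalizing via Fourier transform in $y'$ and the Hankel transform $\mathfrak{H}_b$ in $y_d$, with the boundary term and the $\log(\ell/h)$ remainder emerging from the Bessel integral $P_b(t)$). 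One minor discrepancy: for the lower bound the paper does not pull back a trial density operator to $\Omega$ but instead runs the straightening argument symmetrically, bounding $\textup{Tr}(\tilde\phi H_{\mathbb{R}^d_+,\overline b,\tilde V}\tilde\phi)_-$ above by a perturbed $\Omega$-trace; your pull-back route would also work but the paper's symmetric comparison is cleaner and avoids verifying that the pulled-back $\gamma$ lands in the right form domain.
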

The proof of Theorem~\ref{thm: local asymptotics boundary} will be split into several lemmas. The first of which reduces our problem to the corresponding in a half-space:

\begin{lem}\label{lem: reduction to halfspace}
  Let $\Omega, b , V$ be as in Theorem~\ref{thm: main result}. Let $\phi \in C^1_0(\mathbb{R}^d)$ be supported in a ball $\overline{B}$ of radius $\ell$ such that $\textup{dist}(B, \partial\Omega)\leq 2\ell$, and $\inf_{B\cap \Omega} b = \underline{b}>0$. For $0<\ell\leq c(\Omega, \underline{b})$ and $0<h\leq K\ell$ with $\tilde \phi = \phi\circ \Phi^{-1}, \tilde V = V \circ \Phi^{-1}$,
  \begin{align*}
    \textup{Tr}(\tilde\phi H_{\mathbb{R}^d_+, \overline{b}, \tilde V}&(h)\tilde\phi)_- -  \ell^dh^{-d}o_{\ell\to 0^+}(1)\Bigl(1 + h^2\|V_-\|^{1+d/2}_{L^{1+d/2}(\Omega \cap B)}\Bigr) \\
    &\leq \textup{Tr}(\phi H_{\Omega, b, V}(h)\phi)_-\\
    &\leq \textup{Tr}(\tilde\phi H_{\mathbb{R}^d_+, \underline{b}, \tilde V}(h)\tilde\phi)_- +  \ell^dh^{-d}o_{\ell\to 0^+}(1)\Bigl(1 + h^2\|V_-\|^{1+d/2}_{L^{1+d/2}(\Omega \cap B)}\Bigr)
  \end{align*}
  where $\overline{b} = \sup_{x\in B\cap \Omega}b(x)$. Moreover, the error terms and the implicit constants can be quantified in terms of the $C^1$-regularity of $\partial\Omega$ and $K, \underline{b}, \overline{b}, \|\phi\|_{L^\infty}$.
\end{lem}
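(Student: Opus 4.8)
The plan is to transplant the quadratic form of $H_{\Omega,b,V}(h)$ to the half-space via the diffeomorphism $\Phi$ of Section~\ref{sec: Straightening of bdry} and compare it, term by term, with the half-space forms associated to the constant parameters $\underline b$ and $\overline b$. For $u$ in the form domain of $\phi H_{\Omega,b,V}(h)\phi$ — which may be taken supported in $\overline B$, so that Lemma~\ref{lem: Straightening of boundary}\eqref{itm: gradient estimate} applies after extending by zero — set $\tilde u = (\phi u)\circ\Phi^{-1}$ and expand the form $\langle \phi u, H_{\Omega,b,V}(h)\phi u\rangle$ into its four pieces. The kinetic term transforms up to a factor $1+O(\omega(\ell))$ by Lemma~\ref{lem: Straightening of boundary}\eqref{itm: gradient estimate}; the potential term $h^2\int V|\phi u|^2$ and the $L^2$ term $-\int|\phi u|^2$ are preserved exactly by the volume identity Lemma~\ref{lem: Straightening of boundary}\eqref{itm: volume preservation}; and the singular Hardy term is handled by Lemma~\ref{lem: A geometric lemma}, which gives $\frac{1}{\dist(x,\partial\Omega)^2} = \frac{1+O(\omega(2\ell)^2)}{\dist(\Phi(x),\partial\mathbb R^d_+)^2}$ pointwise on $B\cap\Omega$. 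Using the positivity of $b$, replace $b^2(x)-\tfrac14$ by the constant $\underline b^2-\tfrac14$ for the upper bound (resp.\ $\overline b^2-\tfrac14$ for the lower bound); since $\underline b\le b(x)\le\overline b$ this produces the operator $H_{\mathbb R^d_+,\underline b,\tilde V}(h)$ (resp.\ $H_{\mathbb R^d_+,\overline b,\tilde V}(h)$) plus errors.

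The resulting error in the quadratic form has the schematic shape $C\omega(2\ell)^2\bigl(h^2\|\nabla\tilde u\|^2 + h^2\int (b^2+\tfrac14)\dist(\Phi(x),\partial\mathbb R^d_+)^{-2}|\tilde u|^2\bigr)$ together with the $O(\omega(\ell))$ kinetic error — note the sign of the Hardy comparison in Lemma~\ref{lem: A geometric lemma} is favorable in exactly one direction, and the $(b^2-\tfrac14)$ vs.\ constant replacement has a definite sign, so for each of the two inequalities one only needs to bound the error in the direction where it does \emph{not} already have the right sign. To absorb this error I would use the Hardy inequality: since $\underline b^2-\tfrac14 > -\tfrac14$, a fraction $\theta(\underline b)>0$ of the half-space kinetic plus Hardy energy dominates a Hardy term, so one writes
\begin{align*}
 (1\pm C\omega(2\ell)^2)\,\langle \tilde u, H_{\mathbb R^d_+,\underline b,\tilde V}(h)\tilde u\rangle \ge \langle\tilde u, H_{\mathbb R^d_+,\underline b,\tilde V}(h)\tilde u\rangle - C\omega(2\ell)^2\bigl(h^2\langle\tilde u,(-\Delta)\tilde u\rangle + h^2\langle\tilde u, c\,\dist^{-2}\tilde u\rangle\bigr)
\end{align*}
and bounds the last bracket by $C_{\underline b}\,\langle\tilde u, H_{\mathbb R^d_+,\underline b,\tilde V}(h)\tilde u\rangle_+ + C_{\underline b}\bigl(\text{lower-order}\bigr)$. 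Passing to traces of negative parts via the variational principle, the $\omega(2\ell)^2$-fraction of the trace is controlled by $\textup{Tr}(\tilde\phi H_{\mathbb R^d_+,\underline b,\tilde V}(h)\tilde\phi)_-$ itself, which by the local Hardy--Lieb--Thirring bound Lemma~\ref{lem: local HLT} (applied on the half-space, whose $C^1$ data are those of $\mathbb R^d_+$, uniformly) is $O(\ell^d h^{-d}(1 + h^2\|V_-\|_{L^{1+d/2}}^{1+d/2}))$; since $\omega(2\ell)\to0$ as $\ell\to0^+$, this is precisely the claimed $\ell^d h^{-d}o_{\ell\to0^+}(1)(1+h^2\|V_-\|^{1+d/2})$ error. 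The support relocation is harmless: $\tilde\phi$ is supported in $\Phi(\overline B)\subseteq\overline{B_{2\ell}(0)}$ by Lemma~\ref{lem: Straightening of boundary}\eqref{itm: sup gradient estimate}, so all half-space traces are over a ball of comparable radius.

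The main obstacle I anticipate is the bookkeeping of which direction each error term helps versus hurts, combined with making the absorption step rigorous at the level of operators/traces rather than forms: one must invoke the variational principle carefully (the negative part of a sum of operators versus sum of negative parts, splitting off a small multiple of a positive operator) and ensure the Hardy constant $c_H(\underline b^2/2,\Omega)$ entering Lemma~\ref{lem: local HLT} is controlled once $\ell$ is small enough, which is why the hypothesis $\ell\le c(\Omega,\underline b)$ appears. A secondary technical point is that $\tilde V = V\circ\Phi^{-1}$ must be checked to satisfy the same integrability bounds with the same norms, which is immediate from the volume-preservation property. Everything else is a direct substitution using the three lemmas of Section~\ref{sec: Prelmininaries}.
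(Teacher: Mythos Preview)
Your proposal is correct and follows essentially the same route as the paper: transplant via $\Phi$ using Lemmas~\ref{lem: Straightening of boundary} and~\ref{lem: A geometric lemma}, replace $b$ by the constant $\underline b$ (resp.\ $\overline b$), and control the residual error through the local Hardy--Lieb--Thirring inequality of Lemma~\ref{lem: local HLT} applied in the half-space. The paper's implementation differs only in two cosmetic respects. First, rather than your ``absorb the error by a $\theta(\underline b)$-fraction of kinetic plus Hardy energy'' phrasing, the paper splits the full operator as $(1-\rho)H + \rho H$ for a parameter $\rho>0$, keeps $(1-\rho)H$ as the main term, and bounds the trace of the $\rho$-piece directly via Lemma~\ref{lem: local HLT}; this is the same mechanism, just organized at the operator level from the start. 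Second, the paper makes explicit the case distinction on the sign of $\underline b^2-\tfrac14$ (resp.\ $\overline b^2-\tfrac14$) when invoking Lemma~\ref{lem: A geometric lemma}, since the one-sided inequality there helps in opposite directions depending on that sign --- you allude to this (``favorable in exactly one direction'') but would need to spell it out. One small bookkeeping point: the dominant error is the kinetic one, which is $O(\omega(\ell))$ rather than $O(\omega(\ell)^2)$; the paper accordingly takes $\rho=\sqrt{\omega(4\ell)}+\omega(8\ell)$ to satisfy simultaneously $\rho\gg\omega(4\ell)$ and the positivity condition needed for Lemma~\ref{lem: local HLT} to apply to the split-off piece.
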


\begin{proof}
  Provided $\ell$ is small enough there exists a ball $B'\supset B$ with centre on $\partial \Omega$ and radius $4\ell$ which satisfies the assumptions in Section~\ref{sec: Straightening of bdry}. Let $\Phi$ be the associated diffeomorphism.
  
  We split the proof into two parts, in the first part we prove the upper bound and in the second we prove the lower bound.
  \smallskip

  {\noindent\bf Part 1:} (Proof of the upper bound) By the variational principle
  \begin{equation*}
    \textup{Tr}(\phi H_{\Omega, b, V}(h)\phi)_- \leq \textup{Tr}(\phi H_{\Omega, \underline{b}, V}(h)\phi)_-\,.
  \end{equation*}
  Moreover, by Lemma~\ref{lem: Straightening of boundary} there exists $C_0>0$ depending only on $d$ such that
  \begin{equation*}
    \textup{Tr}(\phi H_{\Omega, \underline{b}, V}(h)\phi)_- \leq \textup{Tr}\Bigl(\tilde \phi \Bigl(-h^2(1-C_0\omega(4\ell))\Delta_{\mathbb{R}^d_+}+ h^2\frac{\underline{b}^2-1/4}{\textup{dist}(\Phi^{-1}(\,\cdot\,), \partial\Omega)^2}+h^2 \tilde V-1\Bigr)\tilde \phi\Bigr)_-\,.
  \end{equation*}
  We claim that 
  \begin{align*}
    \textup{Tr}\Bigl(&\tilde \phi \Bigl(-h^2(1-C_0\omega(4\ell))\Delta_{\mathbb{R}^d_+}+ h^2\frac{\underline{b}^2-1/4}{\textup{dist}(\Phi^{-1}(\,\cdot\,), \partial\Omega)^2}+h^2 \tilde V-1\Bigr)\tilde \phi\Bigr)_-\\
    &\leq
    \textup{Tr}\Bigl(\tilde \phi \Bigl(-h^2(1-C_0\omega(4\ell))\Delta_{\mathbb{R}^d_+}+ h^2\frac{\underline{b}^2-1/4-C\omega(8\ell)^2}{\textup{dist}(\,\cdot\,, \partial\mathbb{R}^d_+)^2}+h^2 \tilde V-1\Bigr)\tilde \phi\Bigr)_-
  \end{align*}
  for a constant $C$ depending only on $d$. Indeed, if $\underline{b}\geq 1/2$ Lemma~\ref{lem: A geometric lemma} and the variational principle implies
  \begin{align*}
    \textup{Tr}\Bigl(&\tilde \phi \Bigl(-h^2(1-C_0\omega(4\ell))\Delta_{\mathbb{R}^d_+}+ h^2\frac{\underline{b}^2-1/4}{\textup{dist}(\Phi^{-1}(\,\cdot\,), \partial\Omega)^2}+h^2 \tilde V-1\Bigr)\tilde \phi\Bigr)_-\\
    &\leq
    \textup{Tr}\Bigl(\tilde \phi \Bigl(-h^2(1-C_0\omega(4\ell))\Delta_{\mathbb{R}^d_+}+ h^2\frac{\underline{b}^2-1/4}{\textup{dist}(\,\cdot\,, \partial\mathbb{R}^d_+)^2}+h^2 \tilde V-1\Bigr)\tilde \phi\Bigr)_-\\
    &\leq
    \textup{Tr}\Bigl(\tilde \phi \Bigl(-h^2(1-C_0\omega(4\ell))\Delta_{\mathbb{R}^d_+}+ h^2\frac{\underline{b}^2-1/4-C\omega(8\ell)^2}{\textup{dist}(\,\cdot\,, \partial\mathbb{R}^d_+)^2}+h^2 \tilde V-1\Bigr)\tilde \phi\Bigr)_-\,.
  \end{align*}
  Similarly, if $0< \underline{b}< 1/2$ Lemma~\ref{lem: A geometric lemma} and the variational principle implies
  \begin{align*}
    \textup{Tr}\Bigl(&\tilde \phi \Bigl(-h^2(1-C_0\omega(4\ell))\Delta_{\mathbb{R}^d_+}+ h^2\frac{\underline{b}^2-1/4}{\textup{dist}(\Phi^{-1}(\,\cdot\,), \partial\Omega)^2}+h^2 \tilde V-1\Bigr)\tilde \phi\Bigr)_-\\
    &\leq
    \textup{Tr}\Bigl(\tilde \phi \Bigl(-h^2(1-C_0\omega(4\ell))\Delta_{\mathbb{R}^d_+}+ h^2\frac{(\underline{b}^2-1/4)(1+C\omega(8\ell)^2)}{\textup{dist}(\,\cdot\,, \partial\mathbb{R}^d_+)^2}+h^2 \tilde V-1\Bigr)\tilde \phi\Bigr)_-\\
    &\leq
    \textup{Tr}\Bigl(\tilde \phi \Bigl(-h^2(1-C_0\omega(4\ell))\Delta_{\mathbb{R}^d_+}+ h^2\frac{\underline{b}^2-1/4-C\omega(8\ell)^2}{\textup{dist}(\,\cdot\,, \partial\mathbb{R}^d_+)^2}+h^2 \tilde V-1\Bigr)\tilde \phi\Bigr)_-\,.
  \end{align*}

  For any $2C_0\omega(4\ell)< \rho \leq 1/2$ we estimate
  \begin{align*}
    \textup{Tr}\Bigl(&\tilde \phi \Bigl(-h^2(1-C_0\omega(4\ell))\Delta_{\mathbb{R}^d_+}+ h^2\frac{\underline{b}^2-1/4-C\omega(8\ell)^2}{\textup{dist}(\,\cdot\,, \partial\mathbb{R}^d_+)^2}+h^2 \tilde V-1\Bigr)\tilde \phi\Bigr)_-\\
    &\leq
    \textup{Tr}(\tilde \phi H_{\mathbb{R}^d_+, \underline{b}, \tilde V}(h)\tilde \phi)_-\\
    &\quad 
    +
    \textup{Tr}\Bigl(\tilde \phi \Bigl(-h^2(\rho-C_0\omega(4\ell))\Delta_{\mathbb{R}^d_+}+ h^2\frac{\rho(\underline{b}^2-1/4)-C\omega(8\ell)^2}{\textup{dist}(\,\cdot\,, \partial\mathbb{R}^d_+)^2}+h^2\rho\tilde V -\rho\Bigr)\tilde \phi\Bigr)_-\,.
  \end{align*}
  Provided
  \begin{equation}\label{eq: HLT smallness half-space reduction upper bound}
    \frac{\rho(\underline{b}^2-1/4)-C\omega(8\ell)^2}{\rho-C_0\omega(4\ell)} = \Bigl(\underline{b}^2-1/4\Bigr)\frac{1}{1-C_0\omega(4\ell)\rho^{-1}} - C \frac{\omega(8\ell)^2}{\rho-C_0\omega(4\ell)}> -\frac{1}{4}\,,
  \end{equation}
  we can apply the local Hardy--Lieb--Thirring inequality of Lemma~\ref{lem: local HLT} in $\mathbb{R}^d_+$ to bound
  \begin{align*}
    &\textup{Tr}\Bigl(\tilde \phi \Bigl(-h^2(\rho-C_0\omega(4\ell))\Delta_{\mathbb{R}^d_+}+ h^2\frac{\rho(\underline{b}^2-1/4)-C\omega(8\ell)^2}{\textup{dist}(\,\cdot\,, \partial\mathbb{R}^d_+)^2}+h^2\rho \tilde V-\rho\Bigr)\tilde \phi\Bigr)_-\\
    &\ \leq
    C \rho^{1+d/2} \ell^d h^{-d} (\rho-C_0\omega(4\ell))^{-d/2}\Bigr(1+h^2\rho^{d/2}(\rho-C_0\omega(4\ell))^{-d/2}\|V_-\|^{1+d/2}_{L^{1+d/2}(\Omega \cap B)}\Bigr)\\
    &\ \leq
    C \rho \ell^d h^{-d}\Bigr(1+h^2\|V_-\|^{1+d/2}_{L^{1+d/2}(\Omega \cap B)}\Bigr)\,.
  \end{align*}
  Set $\rho = \sqrt{\omega(4\ell)}+ \omega(8\ell)$. Then $\rho>2C_0\omega(4\ell)$ and~\eqref{eq: HLT smallness half-space reduction upper bound} are valid provided $\ell$ is small enough. Therefore, upon collecting  the estimates above we arrive at the bound
  \begin{align*}
    \textup{Tr}(\phi H_{\Omega, b, V}(h)\phi)_- &\leq \textup{Tr}(\tilde\phi H_{\mathbb{R}^d_+, \underline{b}, \tilde V}(h)\tilde\phi)_- \\
    &\quad + C\ell^d h^{-d}\Bigl(\sqrt{\omega(4\ell)}+ \omega(8\ell)\Bigr)\Bigl(1+ h^2\|V_-\|^{1+d/2}_{L^{1+d/2}(\Omega \cap B)}\Bigr)\,,
  \end{align*}
  thus completing the proof of the upper bound.
  \smallskip

  {\noindent \bf Part 2:} (Proof of the lower bound) The proof of the lower bound proceeds as the upper bound but with the roles of $\Omega$ and $\mathbb{R}^d_+$ exchanged.

  By Lemma~\ref{lem: Straightening of boundary},
  \begin{equation*}
    \textup{Tr}(\tilde \phi H_{\mathbb{R}^d_+, \overline{b}, \tilde V}(h)\tilde \phi)_- \!\leq \textup{Tr}\Bigl(\phi \Bigl(-h^2(1+C_0\omega(4\ell))^{-1}\!\Delta_{\Omega}+ h^2\frac{\overline{b}^2-1/4}{\textup{dist}(\Phi(\,\cdot\,), \partial\mathbb{R}^d_+)^2}+h^2 V-1\Bigr)\phi\Bigr)_-.
  \end{equation*}
  If $\ell$ is sufficiently small so that $C_0\omega(4\ell)\leq 1/2$ then $(1+C_0\omega(4\ell))^{-1}\geq 1-C_0\omega(4\ell)>0$, and hence
  \begin{equation*}
    \textup{Tr}(\tilde \phi H_{\mathbb{R}^d_+, \overline{b}, \tilde V}(h)\tilde \phi)_- \leq \textup{Tr}\Bigl(\phi \Bigl(-h^2(1-C_0\omega(4\ell))\Delta_{\Omega}+ h^2\frac{\overline{b}^2-1/4}{\textup{dist}(\Phi(\,\cdot\,), \partial\mathbb{R}^d_+)^2}+h^2 V-1\Bigr)\phi\Bigr)_-\,.
  \end{equation*}
  By splitting into cases depending on the sign of $\overline{b}^2-1/4$ as in the proof of the upper bound one finds 
  \begin{align*}
    \textup{Tr}\Bigl(&\phi \Bigl(-h^2(1-C_0\omega(4\ell))\Delta_{\Omega} +h^2\frac{\overline{b}^2-1/4}{\textup{dist}(\Phi(\,\cdot\,), \partial\mathbb{R}^d_+)^2}+h^2 V-1\Bigr) \phi\Bigr)_-\\
    &\leq
    \textup{Tr}\Bigl(\phi \Bigl(-h^2(1-C_0\omega(4\ell))\Delta_{\Omega}+ h^2\frac{\overline{b}^2-1/4-C\omega(8\ell)^2}{\textup{dist}(\,\cdot\,, \partial\Omega)^2}+h^2 V-1\Bigr)\phi\Bigr)_-
  \end{align*}
  for a constant $C$ depending on $d, \overline{b}$.

For any $2C_0\omega(4\ell)< \rho \leq 1/2$ we estimate
  \begin{align*}
    \textup{Tr}\Bigl(&\phi \Bigl(-h^2(1-C_0\omega(4\ell))\Delta_{\Omega}+ h^2\frac{\overline{b}^2-1/4-C\omega(8\ell)^2}{\textup{dist}(\,\cdot\,, \partial\Omega)^2}+h^2 V-1\Bigr)\phi\Bigr)_-\\
    &\leq
    \textup{Tr}(\phi H_{\Omega, \overline{b}, V}(h)\phi)_-\\
    &\quad 
    +
    \textup{Tr}\Bigl(\phi \Bigl(-h^2(\rho -C_0\omega(4\ell))\Delta_{\Omega}+ h^2\frac{\rho (\overline{b}^2-1/4)-C\omega(8\ell)^2}{\textup{dist}(\,\cdot\,, \partial\Omega)^2}+h^2\rho V-\rho \Bigr)\phi\Bigr)_-\\
    &\leq
    \textup{Tr}(\phi H_{\Omega, b, V}(h)\phi)_-\\
    &\quad 
    +
    \textup{Tr}\Bigl(\phi \Bigl(-h^2(\rho -C_0\omega(4\ell))\Delta_{\Omega}+ h^2\frac{\rho (\overline{b}^2-1/4)-C\omega(8\ell)^2}{\textup{dist}(\,\cdot\,, \partial\Omega)^2}+h^2\rho V-\rho \Bigr)\phi\Bigr)_-\,.
  \end{align*}
Provided the analogue of~\eqref{eq: HLT smallness half-space reduction upper bound} with $\overline{b}$ instead of $\underline{b}$ holds we can apply the local Hardy--Lieb--Thirring inequality of Lemma~\ref{lem: local HLT} to bound
  \begin{align*}
    \textup{Tr}\Bigl(&\phi \Bigl(-h^2(\rho -C_0\omega(4\ell))\Delta_{\Omega}+ h^2\frac{\rho (\overline{b}^2-1/4)-C\omega(8\ell)^2}{\textup{dist}(\,\cdot\,, \partial\Omega)^2}+h^2\rho V-\rho \Bigr)\phi\Bigr)_-\\
    &\leq
    C \rho \ell^dh^{-d}\Bigl(1+ h^2\|V_-\|^{1+d/2}_{L^{1+d/2}(\Omega \cap B)}\Bigr)\,.
  \end{align*}
  Again we can set $\rho = \sqrt{\omega(4\ell)}+ \omega(8\ell)$ and combine the above estimates to arrive at
  \begin{align*}
    \textup{Tr}(\tilde\phi H_{\mathbb{R}^d_+, \overline{b}, \tilde V}(h)\tilde \phi)_- &\leq \textup{Tr}(\phi H_{\Omega, b, V}(h)\phi)_- \\
    &\quad + C\ell^d h^{-d}\Bigl(\sqrt{\omega(4\ell)}+ \omega(8\ell)\Bigr)\Bigl(1+ h^2\|V_-\|^{1+d/2}_{L^{1+d/2}(\Omega \cap B)}\Bigr)\,.
  \end{align*}
  This completes the proof of the lower bound and hence the proof of Lemma~\ref{lem: reduction to halfspace}.
\end{proof}

The proof of Theorem~\ref{thm: local asymptotics boundary} has been reduced to understanding the asymptotics of $\textup{Tr}(\phi H_{\mathbb{R}^d_+, b, V}(h)\phi)_-$ with $b(x)\equiv b>0$.

\begin{lem}\label{lem: halfspace asymptotics}
  Let $\Omega, V$ be as in Theorem~\ref{thm: main result}. Let $\phi \in C^1_0(\mathbb{R}^d)$ be supported in a ball $\overline{B}$ of radius $\ell$ and satisfy 
  \begin{equation}\label{eq: grad bound halfspace asymptotics}
    \|\nabla \phi\|_{L^\infty}\leq M \ell^{-1}\,.
  \end{equation}
  With $b(x) \equiv b> 0$ we have, for $0<h\leq K\ell$,
  \begin{align*}
    \biggl|\textup{Tr}(\phi H_{\mathbb{R}^d_+, b, V}&(h)\phi)_- - L_d h^{-d} \int_{\mathbb{R}^d_+}\phi^2(y)\,dy + \frac{b\,L_{d-1}}{2}h^{-d+1}\int_{\partial \mathbb{R}^d_+} \phi^2(y)\,d\mathcal{H}^{d-1}(y)\biggr|\\
    & \leq C h^{-d+2}\Bigl(\ell^{d-2}|{\log(\ell/h)}|+\|V_+\|_{L^1(\mathbb{R}^d_+ \cap B)}+ \ell^d\|V_-\|^{1+d/2}_{L^{1+d/2}(\mathbb{R}^d_+ \cap B)}\Bigr)\,,
  \end{align*}
  where $C$ depends only on $d, M, K, b$ and can be uniformly bounded for $b$ in compact subsets of $[0, \infty)$.
\end{lem}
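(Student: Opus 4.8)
The plan is to reduce the $d$-dimensional half-space problem to a one-dimensional problem on the half-line by separation of variables in the $y_d$-direction, then treat the tangential directions by a Berezin--Li--Yau/coherent-state argument as in the bulk lemma. Write $y = (y', y_d) \in \mathbb{R}^{d-1} \times \mathbb{R}_+$. The operator $-h^2\Delta_{\mathbb{R}^d_+} + h^2(b^2 - \tfrac14) y_d^{-2}$ is, modulo the potential $h^2 V$, a sum of the tangential Laplacian $-h^2\Delta_{y'}$ and the one-dimensional operator $\ell_b(h) = -h^2 \partial_{y_d}^2 + h^2(b^2-\tfrac14) y_d^{-2}$ on $L^2(\mathbb{R}_+)$ with a Dirichlet condition at $0$. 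The homogeneous operator $\ell_b(h)$ has absolutely continuous spectrum $[0,\infty)$ and its generalized eigenfunctions are explicit in terms of Bessel functions $J_b$; in particular one has a precise formula for the spectral density, which is what produces the constant $b\,L_{d-1}/2$ in the second term. The first key step is therefore to record the one-dimensional spectral analysis: for the operator $-\partial_t^2 + (b^2-\tfrac14)t^{-2}$ on $\mathbb{R}_+$, compute $\int_0^\infty \bigl[ \mathds{1}(-\partial_t^2 + (b^2-\tfrac14)t^{-2} \le 1) \bigr](t,t)\,dt$ or, more precisely, the regularized quantity that appears after multiplying by a cutoff, and extract both the leading $y_d$-integrated (bulk) contribution and the boundary defect proportional to $b$.

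Next I would carry out the upper and lower bounds separately, following the template of Lemma~\ref{lem: local asymptotics bulk}. For the \emph{lower} bound, discard $V_-$ is not possible in the wrong direction, so instead split $-h^2\Delta = -h^2(1-\rho)\Delta - h^2\rho\Delta$, absorb $h^2 V_-$ against the $-h^2\rho\Delta$ piece via the Lieb--Thirring inequality (costing $Ch^2\rho^{-d/2}\|V_-\|_{L^{1+d/2}}^{1+d/2}$), and for $-h^2(1-\rho)\Delta - h^2 V_+ \ge \ldots$ test against a trial density operator $\gamma$ built from coherent states in $y'$ times the spectral projection of $\ell_b((1-\rho)^{1/2}h)$ onto $[0,1]$; here one must check $0 \le \gamma \le \mathbf 1$ and that the singular term $h^2(b^2-\tfrac14)y_d^{-2}$ evaluated against $\gamma$ reproduces exactly the one-dimensional spectral integral. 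For the \emph{upper} bound, use the variational principle to pass to $H_{\mathbb{R}^d_+, b, V_-}$, split off $-h^2\rho\Delta - h^2 V_1$ (after writing $V_- = V_0 + V_1$ as in Lemma~\ref{lem: local asymptotics bulk}, though here one may take $V_0 = 0$ since no $\|V_0\|_\infty$ hypothesis is imposed and $h \le K\ell$ already controls the relevant scale), handle it by Lieb--Thirring, and for the main term diagonalize in $y'$ by a Fourier/coherent-state decomposition and apply the sharp one-dimensional Berezin--Li--Yau bound for $\ell_b$ on the half-line — which I would derive from the explicit Bessel eigenfunction expansion — against the cutoff $\phi^2$. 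In both bounds, the cutoff $\phi$ contributes the usual commutator error $h^2\|\nabla\phi\|_\infty^2 \lesssim h^2 M^2 \ell^{-2} \cdot \ell^d = h^2\ell^{d-2}$; the logarithmic factor $|\log(\ell/h)|$ is expected to arise precisely from the interplay of the cutoff with the $y_d^{-2}$ singularity — integrating the localization error $\int \phi^2(y)\, y_d^{-2}\,dy$-type quantities over the range $h \lesssim y_d \lesssim \ell$ produces $\log(\ell/h)$ — and I would isolate this by splitting the $y_d$-integration at scale $y_d \sim h$ and at scale $y_d \sim \ell$.

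The main obstacle I anticipate is the sharp one-dimensional spectral computation for $\ell_b(h)$ on the half-line with a cutoff: one needs not merely the leading Weyl term but the next-order \emph{boundary} correction with the correct constant $b\,L_{d-1}/2$, uniformly for $b$ in compact subsets of $[0,\infty)$ (including the degenerate value $b=0$, where $b^2 - \tfrac14 = -\tfrac14$ is the critical Hardy coupling and the Bessel index degenerates). Concretely, after separating variables the $d$-dimensional two-term expansion amounts to showing
\begin{equation*}
  \int_0^\infty \Bigl( \textup{Tr}_{L^2(\mathbb{R}_+)}\bigl( \ell_b(h) - 1 + h^2 t^2 \text{-harmonic modes}\bigr)_- \Bigr) \cdots
\end{equation*}
has the claimed structure — more precisely, writing $N_b(\lambda;t)$ for the integral kernel on the diagonal of $\mathds{1}(-\partial_s^2 + (b^2-\tfrac14)s^{-2} \le \lambda)$, one must show $\int_0^\infty (N_b(\lambda;t) - \tfrac{1}{\pi}\sqrt\lambda\,)\,dt$ converges (after regularization) to an explicit multiple of $b$. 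I expect this to follow from the Bessel-function representation $\varphi_{b,k}(t) = \sqrt{kt}\,J_b(kt)$ of the generalized eigenfunctions together with known asymptotics of $J_b$ and the Bessel integral $\int_0^\infty J_b(x)^2 \frac{dx}{x} = \frac{1}{2b}$; matching constants against the known case $b = 1/2$ (where $J_{1/2}(x) = \sqrt{2/(\pi x)}\sin x$, recovering the Dirichlet-Laplacian result of \cite{FrankGeisinger_11,FrankGeisinger_12}) provides a consistency check. Once this one-dimensional input is in place, the assembly into the $d$-dimensional estimate via coherent states in the tangential variables, and the bookkeeping of the $V$-dependent and cutoff-dependent errors, is routine and parallels Lemma~\ref{lem: local asymptotics bulk}.
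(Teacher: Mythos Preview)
Your strategy is the paper's: diagonalize $H_{\mathbb{R}^d_+,b,0}$ by Fourier transform in $y'$ and the Hankel transform $\mathfrak{H}_b$ in $y_d$ (generalized eigenfunctions $\sqrt{\xi_d y_d}\,J_b(\xi_d y_d)$), use a trial density operator $\gamma$ built from these for one bound and the Berezin-type inequality $\textup{Tr}(\phi H\phi)_-\le \textup{Tr}(\phi H_-\phi)$ for the other, and peel off $V$ as in Lemma~\ref{lem: local asymptotics bulk}. Two points of confusion, though. First, you have the roles of the two bounds swapped: for the \emph{lower} bound on $\textup{Tr}(\phi H_{V}\phi)_-$, dropping $V_-$ goes the \emph{right} way once you are inside the variational inequality $\textup{Tr}(\phi H_V\phi)_-\ge -\textup{Tr}(\gamma\phi H_V\phi)\ge -\textup{Tr}(\gamma\phi H_{V_+}\phi)$, so no Lieb--Thirring splitting is needed there and the only $V$-cost is $\|V_+\|_{L^1}$; it is the \emph{upper} bound that needs the $\rho$-splitting and Lieb--Thirring to absorb $V_-$ and reduce to $V\equiv 0$. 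Second, the logarithm does not come from a raw localization error $\int\phi^2 y_d^{-2}$ but from the Taylor remainder $\phi^2(y',ht)-\phi^2(y',0)=O(ht\,\ell^{-1})\cdot\ell^{d-1}$ integrated against the boundary-defect function $P_b(t)=\int_0^1(1-\xi^2)^{(d+1)/2}\bigl(\tfrac{1}{\pi}-\xi t J_b(\xi t)^2\bigr)\,d\xi$: since $P_b(t)=O(t^{-2})$ at infinity, $\int_1^{2\ell/h} t\cdot t^{-2}\,dt=O(\log(\ell/h))$. The identity $\int_0^\infty P_b(t)\,dt=b/2$, which yields the second-term constant, is the one-dimensional computation you anticipate; in the paper it is obtained from the differential identity $xJ_b(x)^2=\tfrac{d}{dx}\bigl[\tfrac{x^2}{2}(J_b^2+J_{b+1}^2)-b\,xJ_bJ_{b+1}\bigr]$ together with an Abelian lemma, rather than from $\int_0^\infty J_b(x)^2\,\tfrac{dx}{x}$.
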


\begin{proof}
Our proof proceeds by diagonalizing the operator $H_{\mathbb{R}^d_+, b, 0}(h)$. For the general background on what follows, see~\cite[Chapter XIII]{DunfordSchwartz_II}. 

For $f\in C^2(\mathbb{R}_+)$ define the differential expression
\begin{equation*}
  L_b f(x) = f''(x) - \Bigl(b^2-\frac{1}{4}\Bigr)\frac{f(x)}{x^2}\,.
\end{equation*}
The operator $H_{\mathbb{R}^d_+, b, 0}(h)$ can then be decomposed as
\begin{equation*}
  H_{\mathbb{R}^d_+,b,0}(h) = -h^2\Delta' - h^2L_b\,,
\end{equation*}
where $\Delta' = \sum_{j=1}^{d-1}\frac{\partial^2}{\partial  y_j^2}$ and $L_b$ acts in the $y_d$-coordinate.

For $b > 0, \mu \geq 0$ the ODE
\begin{equation*}
  -L_b u(x)=\mu u(x)
\end{equation*}
has two linearly independent solutions
\begin{align*}
  \psi_{b, \mu}(x)= x^{1/2}J_{b}(x\sqrt{\mu})\quad \mbox{and}\quad \eta_{b, \mu}(x)=x^{1/2}Y_{b}(x\sqrt{\mu})\,.
\end{align*}
If $b\geq 1/2$ only $\psi$ vanishes at $x=0$ while for $b \in (0, 1/2)$ both solutions vanish, indeed $\psi \sim x^{1/2+b}$ and $\eta\sim x^{1/2-b}$ as $x\to 0^+$. However, for any $b\neq \frac{1}{2}$ only the first solution $\psi_{b, \nu}$ is in $H^1$ around zero. In particular, our effective operator $H_{\mathbb{R}^d_+, b, 0}(h)$ is diagonalized through a Fourier transform with respect to $y'$ and a Hankel transform $\mathfrak{H}_{b}$ with respect to $y_d$. Recall that the Hankel transform $\mathfrak{H}_\alpha\colon L^2(\mathbb{R}_+)\to L^2(\mathbb{R}_+)$ is initially defined by
\begin{equation*}
  \mathfrak{H}_\alpha(g)(s)= \int_0^\infty g(t)J_\alpha(s t)\sqrt{st}\,dt \quad \mbox{for }g\in L^1(\mathbb{R}_+)
\end{equation*}
and extended to $L^2(\mathbb{R}_+)$ in a similar manner as the Fourier transform. Moreover, $\mathfrak{H}_\alpha$ is unitary, is its own inverse $\mathfrak{H}_\alpha^2=\mathrm{1}$. Moreover, for $G\in L^\infty(\mathbb{R}_+)$ with compact support and $f\in H_0^1(\mathbb{R}_+)\cap H^2(\mathbb{R}_+)$
\begin{equation*}
 \langle f, G(-L_b) f\rangle_{L^2(\mathbb{R}_+)} = \int_0^\infty G(s^2)|\mathfrak{H}_{b}(f)(s)|^2\,ds\,.
\end{equation*}

By a similar argument as in the proof of Lemma \ref{lem: local HLT} the upper bound can be reduced to the case $V\equiv 0$. Indeed, for any $0<\rho\leq 1/2$,
\begin{align*}
  \textup{Tr}(\phi& H_{\mathbb{R}^d_+, b, V}(h)\phi)_- \\
  &\leq 
  \textup{Tr}(\phi H_{\mathbb{R}^d_+, b, 0}(h(1-\rho))\phi)_- 
  + 
  \textup{Tr}\Bigl(\phi\Bigr(h^2\rho\Delta_{\mathbb{R}^d_+}+ h^2\rho \frac{b^2-1/4}{\textup{dist}(\,\cdot\,, \partial\mathbb{R}^d_+)^2}-h^2V\Bigr)\phi\Bigr)_- \\
  &\leq 
  \textup{Tr}(\phi H_{\mathbb{R}^d_+, b, 0}(h(1-\rho))\phi)_- + C h^2\rho^{-d/2}\|V_-\|^{1+d/2}_{L^{1+d/2}(\mathbb{R}^d_+ \cap B)}\,.
\end{align*}
Set $\rho = h^2/(2K^2\ell^2)$ so that $h^2\rho^{-d/2}=O(\ell^dh^{-d+2})$ and $(h(1-\rho))^{-\beta} = h^{-\beta}(1 + O(\ell^{-2}h^2))$.
The claimed upper bound now follows from the case $V\equiv 0$. 

Using the inequality $\textup{Tr}(\phi H \phi)_- \leq \textup{Tr}(\phi H_- \phi)$, applying the Fourier transform with respect to $y'$ and the Hankel transform in the $y_d$-direction yields
\begin{equation}\label{eq: Upper tracebound halfspace}
\begin{aligned}
  \textup{Tr}(\phi H_{\mathbb{R}^d_+, b, 0}(h)\phi)_-
  &\leq
  \textup{Tr}(\phi (H_{\mathbb{R}^d_+, b, 0}(h))_-\phi)\\
  &=
  \frac{1}{(2\pi)^{d-1}}\iint_{\mathbb{R}^d_+\times \mathbb{R}^d_+} \phi^2(y)(h^2|\xi|^2-1)_- \xi_d y_d J_{b}(\xi_d y_d)^2\,d\xi dy\,.
\end{aligned}
\end{equation}

For the lower bound define the operator $\gamma$ with integral kernel
\begin{equation*}
  \gamma(x, y) = \frac{1}{(2\pi)^{d-1}}\chi(x) \int_{\mathbb{R}^d_+\cap B_{h^{-1}}(0)} e^{i \xi'(x'-y')}\sqrt{\xi_d x_d}J_{b}(\xi_d x_d) \sqrt{\xi_d y_d}J_{b}(\xi_d y_d) \,d \xi\,  \chi(y)\,,
\end{equation*}
where $\chi\in C_0^\infty(\mathbb{R}^d)$ is such that $0\leq \chi \leq 1$ and $\chi\equiv 1$ on $\textup{supp}\, \phi$. The operator $\gamma$ is trace class, satisfies $0\leq \gamma \leq \mathbf{1}$, and its range is contained in the domain of $H_{\mathbb{R}^d_+, b, V}$. Thus, by the variational principle,
\begin{equation}\label{eq: lower trace bound halfspace}
\begin{aligned}
  -\textup{Tr}(\phi &H_{\mathbb{R}^d_+, b, V}(h)\phi)_-\\
  &\leq
  \textup{Tr}(\gamma \phi H_{\mathbb{R}^d_+, b, V_+}(h)\phi)\\
  &=
  \frac{1}{(2\pi)^{d-1}}\iint_{\mathbb{R}^d_+\times\mathbb{R}^d_+}(h^2|\xi|^2 -1)_- \phi^2(x)\xi_d x_dJ_{b}(\xi_dx_d)^2 d\xi dx \\
  &\quad
  + h^{-d+2}\int_{\mathbb{R}^d_+}(V_+(x)\phi^2(x)+|\nabla\phi(x)|^2) \int_0^{1}(x_d th^{-1})J_{b}(x_d th^{-1})^2\,dt dx \\
  &\leq
  \frac{1}{(2\pi)^{d-1}}\iint_{\mathbb{R}^d_+\times\mathbb{R}^d_+}(h^2|\xi|^2 -1)_- \phi^2(x)\xi_d x_dJ_{b}(\xi_dx_d)^2 d\xi dx\\
  &\quad
  + Ch^{-d+2}\int_{\mathbb{R}^d_+}(V_+(x)\phi^2(x)+|\nabla\phi(x)|^2)\,dx\,, 
\end{aligned}
\end{equation}
with $C$ uniformly bounded for $b$ in compact subsets of $[0, \infty)$, since $\|\sqrt{\,\cdot\,}J_b\|_{L^\infty(\mathbb{R}_+)}<\infty$ uniformly for $b$ in compact subsets of $[0, \infty)$ (see~\cite[Chapter 7]{Watson_BesselFunctions}). By~\eqref{eq: grad bound halfspace asymptotics} we can estimate $\|\phi\|_{L^\infty}\leq M$ and $\int_{\mathbb{R}^d_+}|\nabla \phi(x)|^2\,dx \leq C \ell^{d-2}.$

What remains is to understand the common integral in~\eqref{eq: Upper tracebound halfspace} and~\eqref{eq: lower trace bound halfspace}. We begin by extracting the desired leading term:
\begin{align}
  &\frac{1}{(2\pi)^{d-1}}\iint_{\mathbb{R}^d_+\times \mathbb{R}^d_+} \phi^2(y)(h^2|\xi|^2-1)_- \xi_d y_d J_{b}(\xi_d y_d)^2\,d\xi dy\nonumber \\
  &\ =
  L_dh^{-d}\int_{\mathbb{R}^d_+} \phi^2(y)dy \label{eq: extraction of main term}\\
  &\ \quad
  -L_{d-1}h^{-d+1}\int_0^\infty\hspace{-5pt}\int_{\mathbb{R}^{d-1}} \phi^2(y', h t)\,dy'\int_0^1(1-\xi_d^2)^{(d+1)/2} \Bigl(\frac{1}{\pi}-\xi_d t J_{b}(\xi_d t)^2\Bigr)\,d\xi_d dt\,. \nonumber
\end{align}

Define, for $b \geq 0$ and $t\geq 0$,
\begin{equation*}
  P_b(t) = \int_0^1(1-\xi^2)^{(d+1)/2} \Bigl(\frac{1}{\pi}-\xi t J_{b}(\xi t)^2\Bigr)\,d\xi\,.
\end{equation*}
In Lemmas~\ref{lem: Pnu asymptotics} and~\ref{lem: Pnu integral identity} we shall prove that
\begin{equation}\label{eq: P properties}
   \int_0^\infty P_b(t)\, dt = \frac{b}{2}\quad \mbox{and} \quad P_b(t)= O(t^{-2}) \mbox{ as } t\to \infty\,,
\end{equation}
with the implicit constant uniformly bounded for $b$ in compact subsets of $[0, \infty)$.

Using~\eqref{eq: P properties} we can estimate 
\begin{align*}
  \int_0^\infty\int_{\mathbb{R}^{d-1}} &\phi^2(y', h t)\,dy'P_b(t) dt\\
  &=
  \int_0^{2\ell/h}\int_{\mathbb{R}^{d-1}} \phi^2(y', h t)\,dy' P_b(t) dt\\
  &=
  \frac{b}{2}\int_{\mathbb{R}^{d-1}} \phi^2(y', 0)\,dy'
  - \int_{2\ell/h}^{\infty}\int_{\mathbb{R}^{d-1}} \phi^2(y', 0)\,dy' P_b(t) dt\\
  &\quad
  + 2\int_0^{2\ell/h}ht\int_{\mathbb{R}^{d-1}} \int_0^1\phi(y', h t s)\partial_{y_d}\phi(y', h t s)\,ds\,dy' P_b(t) dt\\
  &=
  \frac{b}{2}\int_{\mathbb{R}^{d-1}} \phi^2(y', 0)\,dy'
  + O(h\ell^{d-2}|{\log(\ell/h)}|)\,.
\end{align*}
Combined with~\eqref{eq: extraction of main term},~\eqref{eq: Upper tracebound halfspace}, and~\eqref{eq: lower trace bound halfspace} this completes the proof of Lemma~\ref{lem: halfspace asymptotics}.
\end{proof}

We are now ready to prove Theorem~\ref{thm: local asymptotics boundary}.
\begin{proof}[Proof of Theorem~\ref{thm: local asymptotics boundary}]
	By combining Lemma~\ref{lem: reduction to halfspace} and Lemma~\ref{lem: halfspace asymptotics} the claimed estimate follows from
	\begin{align*}
		\int_{\partial\Omega}\phi^2(x)&\biggl[b(x)-\inf_{y\in \Omega \cap B}b(y)\biggr]\,d\mathcal{H}^{d-1}(x)\\
		& \leq \int_{\partial\Omega}\phi^2(x)\biggl[\,\sup_{y\in \Omega \cap B}b(y)-\inf_{y\in \Omega \cap B}b(y)\biggr]\,d\mathcal{H}^{d-1}(x) \,,
	\end{align*}
	and the corresponding inequality for the $\sup$ and the fact that $\textup{supp}\,\phi \subseteq \overline{B} \subset \overline{B_{2\ell}(x)}$ for any $x\in \textup{supp}\,\phi$.
\end{proof}

\section{From local to global asymptotics}\label{sec: local to global}

In this section we prove our main result by piecing together the local asymptotics obtained above. The key ingredient is the following construction of a continuum partition of unity due to Solovej and Spitzer~\cite{MR2013804}.

Let
\begin{equation*}
  \ell(u) = \frac{1}{2}\max\{\mathrm{dist}(u, \Omega^c), 2\ell_0\}
\end{equation*}
with a small parameter $0<\ell_0$ to be determined. Note that $0<\ell\leq \max\{\tfrac{r_{in}(\Omega)}{2}, \ell_0\}$ and, since $|\nabla \textup{dist}(u, \Omega^c)|=1$ a.e., $\|\nabla \ell\|_{L^\infty}\leq \frac{1}{2}.$ Note also that $\mathrm{dist}(B_{\ell(u)}, \Omega^c))\leq 2\ell(u)$ if and only if $\mathrm{dist}(u, \partial \Omega)\leq 2\ell_0$ in which case $\ell(u)=\ell_0$. In particular, if $\mathrm{dist}(u, \Omega)>\ell_0$ then $B_{\ell(u)}(u)\cap\Omega = \emptyset$.

Fix a function $\phi\in C^\infty_0(\mathbb{R}^d)$ with $\textup{supp}\,\phi \subseteq \overline{B_1(0)}$ and $\|\phi\|_{L^2}=1$. By~\cite[Theorem~22]{MR2013804} (see also~\cite[Lemma 2.5]{FrankLarson}) the functions
$$
\phi_u(x) = \phi\left(\frac{x-u}{\ell(u)}\right)\, \sqrt{1+ \nabla\ell(u)\cdot\frac{x-y}{\ell(u)}} \,,
\qquad x\in\mathbb R^d \,,\ u\in\mathbb R^d \,,
$$
belong to $C_0^\infty(\mathbb R^d)$ with $\mathrm{supp}\, \phi_u \subseteq \overline{B_{\ell(u)}(u)}$, satisfy
\begin{equation}
\label{eq:phi_properties1}
\int_{\mathbb{R}^d} \phi_u(x)^2 \ell(u)^{-d}\, du = 1
\qquad\text{for all}\ x\in\mathbb{R}^d
\end{equation}
and, with a constant $C$ depending only on $d$,
\begin{equation*}
  \|\phi_u\|_{L^\infty}\leq \sqrt 2\, \|\phi\|_{L^\infty} \quad\text{and}\quad \|\nabla \phi_u\|_{L^\infty} \leq C\ell(u)^{-1} \|\nabla\phi\|_{L^\infty}
\quad\text{for all}\ u\in\mathbb{R}^d  \, .
\end{equation*}

The application to our problem here is summarized in the following lemma:

\begin{lem}\label{lem: localization Schrodinger}
  Let $\Omega, b, V$ be as in Theorem~\ref{thm: main result} and define $\ell, \{\phi_u\}_{u\in \mathbb{R}^d}$ as above. Then, for $0<\ell_0 \leq c(\Omega, b)$ and $0<h \leq K \ell_0$,
  \begin{align*}
     \Biggl|\textup{Tr}( H_{\Omega, b, V}(h))_- &- \int_{\mathbb{R}^d}\textup{Tr}(\phi_u H_{\Omega, b, V}(h)  \phi_u)_- \ell(u)^{-d}\,du\Biggr| \\
     &\leq C h^{-d+2} \int_{\mathrm{dist}(u, \Omega)\leq \ell_0}\Bigr(1+ h^2\|V_-\|^{1+d/2}_{L^{1+d/2}(\Omega \cap B_{\ell(u)}(u))}\Bigr)\ell(u)^{-2}\,du\,,
   \end{align*}
   where the constant $C$ depends only on $\Omega, b, K, \|\phi\|_{L^\infty}.$
\end{lem}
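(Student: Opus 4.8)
The plan is to use the IMS-type localization formula adapted to the continuum partition of unity $\{\phi_u\}$. The starting point is the identity
\begin{equation*}
  H_{\Omega, b, V}(h) = \int_{\mathbb{R}^d} \phi_u H_{\Omega, b, V}(h) \phi_u \, \ell(u)^{-d}\,du + h^2\int_{\mathbb{R}^d} |\nabla \phi_u|^2 \, \ell(u)^{-d}\,du\,,
\end{equation*}
valid as an identity of quadratic forms, which follows from \eqref{eq:phi_properties1} together with the standard computation $\sum \phi_u(-\Delta)\phi_u = -\Delta + \sum|\nabla\phi_u|^2$ in its integrated form (this is exactly the localization formula of Solovej--Spitzer). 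The function $x\mapsto \int |\nabla\phi_u(x)|^2\ell(u)^{-d}\,du$ is supported in $\{\textup{dist}(x,\Omega)\le \ell_0\}$ (since $B_{\ell(u)}(u)\cap\Omega=\emptyset$ when $\textup{dist}(u,\Omega)>\ell_0$) and, by the bound $\|\nabla\phi_u\|_\infty\le C\ell(u)^{-1}$ and the fact that only $u$ with $|x-u|\le\ell(u)$ contribute, is bounded pointwise by $C\ell(x)^{-2}$ where I abuse notation writing $\ell(x)$ for the common length scale near $x$.

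Next I would apply the variational principle in both directions. For the upper bound on $\textup{Tr}(H_{\Omega,b,V}(h))_-$: dropping the nonnegative localization-error term gives $H_{\Omega,b,V}(h)\ge \int \phi_u H_{\Omega,b,V}(h)\phi_u\,\ell(u)^{-d}\,du$, and since $\textup{Tr}(A+B)_-\le \textup{Tr}(A_-) + \textup{Tr}(B_-)$ fails in general but the correct statement $\textup{Tr}(\int \phi_u A\phi_u\,d\mu(u))_-\le \int\textup{Tr}(\phi_u A\phi_u)_-\,d\mu(u)$ does hold (by duality with $0\le\gamma\le 1$ and $\int\phi_u\gamma\phi_u\,d\mu(u)$ having the same property), one gets
\begin{equation*}
  \textup{Tr}(H_{\Omega,b,V}(h))_- \le \int_{\mathbb{R}^d}\textup{Tr}(\phi_u H_{\Omega,b,V}(h)\phi_u)_-\,\ell(u)^{-d}\,du\,.
\end{equation*}
For the lower bound I would absorb the gradient error: $H_{\Omega,b,V}(h) \le \int\phi_u(H_{\Omega,b,V}(h) + h^2 W_\ell)\phi_u\,\ell(u)^{-d}\,du$ where $W_\ell(x)=C\ell(x)^{-2}\mathds{1}(\textup{dist}(x,\Omega)\le\ell_0)$ dominates the localization error (here one uses $\int\phi_u^2 W_\ell \ell^{-d}\,du = W_\ell$ pointwise after possibly enlarging the constant, using that $W_\ell$ is essentially constant on the scale $\ell(u)$), hence
\begin{equation*}
  \textup{Tr}(H_{\Omega,b,V}(h))_- \ge \int_{\mathbb{R}^d}\textup{Tr}(\phi_u(H_{\Omega,b,V}(h)+h^2W_\ell)\phi_u)_-\,\ell(u)^{-d}\,du\,.
\end{equation*}
The difference between $\textup{Tr}(\phi_u(H+h^2W_\ell)\phi_u)_-$ and $\textup{Tr}(\phi_u H\phi_u)_-$ is controlled by treating $h^2 W_\ell$ as a perturbation: splitting off a fraction $\rho$ of the kinetic energy as in the proof of Lemma~\ref{lem: local HLT}, one bounds this difference by $Ch^2\rho^{-d/2}\ell(u)^{-d}\|W_\ell^{1+d/2}\|_{L^1}\cdot\ell(u)^d + (\text{Hardy-controlled piece})$; since $W_\ell\approx \ell(u)^{-2}$ on $B_{\ell(u)}(u)$ with volume $\approx\ell(u)^d$, this is $\lesssim h^2\rho^{-d/2}\ell(u)^{d-2}\cdot\ell(u)^{-d} = h^2\rho^{-d/2}\ell(u)^{-2}$, and choosing $\rho\sim h^2/\ell(u)^2$ as before gives the stated $h^{-d+2}\ell(u)^{-2}$ with the $\|V_-\|^{1+d/2}$ correction coming along for free from the same Lieb--Thirring step.

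Finally I would justify the restriction of the $u$-integral on the right-hand side of the localization identity to $\{\textup{dist}(u,\Omega)\le\ell_0\}$ in the error term: for $\textup{dist}(u,\Omega)>\ell_0$ we have $\phi_u\equiv 0$ on $\Omega$, so those terms contribute nothing; the error integrand $\ell(u)^{-2}(1+h^2\|V_-\|^{1+d/2}_{L^{1+d/2}(\Omega\cap B_{\ell(u)}(u))})$ is also supported there. The main obstacle is the bookkeeping for the lower bound: one must verify that the gradient-square localization error is genuinely pointwise dominated by a multiple of $\phi_u$-localized $\ell(u)^{-2}$ potentials (so that the variational principle applies cleanly and no leftover Hardy term spoils the estimate), and that the perturbative removal of $h^2 W_\ell$ is uniform in $u$ — in particular that the smallness condition $h\le K\min\{\ell(u), c_H(\underline b^2/2,\Omega)^{-1/2}\}$ needed for Lemma~\ref{lem: local HLT} holds for every relevant $u$, which it does since $\ell(u)\ge\ell_0\ge h/K$ on the support and $c_H$ depends only on $\Omega$ and a lower bound for $b$. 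Everything else is a routine application of the variational principle and the Lieb--Thirring / Berezin--Li--Yau machinery already set up.
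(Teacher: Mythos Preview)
Your approach is essentially the one the paper has in mind: the paper actually omits the proof, referring to \cite[Lemma~2.8]{FrankLarson} and noting that the only modification is to replace the local Berezin--Li--Yau inequality used there by the local Hardy--Lieb--Thirring inequality of Lemma~\ref{lem: local HLT}. Your outline --- IMS localization, variational principle in both directions, and control of the localization error via Lemma~\ref{lem: local HLT} --- is exactly that argument.

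One correction worth flagging: the sign in your IMS identity is reversed. Since $\int\phi_u(-\Delta)\phi_u\,\ell(u)^{-d}du = -\Delta + \int|\nabla\phi_u|^2\,\ell(u)^{-d}du$, one has
\begin{equation*}
  H_{\Omega,b,V}(h) = \int_{\mathbb{R}^d}\phi_u H_{\Omega,b,V}(h)\phi_u\,\ell(u)^{-d}\,du \;-\; h^2\int_{\mathbb{R}^d}|\nabla\phi_u|^2\,\ell(u)^{-d}\,du\,.
\end{equation*}
This interchanges your two cases. The bound $\textup{Tr}(H)_-\ge\int\textup{Tr}(\phi_u H\phi_u)_-\,\ell(u)^{-d}du$ then holds \emph{without} any error term (use the trial density matrix $\gamma=\int\phi_u\,\mathds{1}(\phi_u H\phi_u<0)\,\phi_u\,\ell(u)^{-d}du$, which satisfies $0\le\gamma\le 1$ by \eqref{eq:phi_properties1}, and cyclicity of the trace). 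It is the opposite inequality that picks up the localization error $h^2\textup{Tr}(\mathds{1}(H<0)\,W_\ell)$, and it is this term that is controlled by Lemma~\ref{lem: local HLT} in the manner you sketch. With this sign correction your outline goes through.
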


For the sake of brevity, we omit the proof of Lemma~\ref{lem: localization Schrodinger} and instead refer the reader to the proof of~\cite[Lemma 2.8]{FrankLarson}. Lemma~\ref{lem: localization Schrodinger} can be proved in the same manner but replacing the use of a local Berezin--Li--Yau inequality by an application of Lemma~\ref{lem: local HLT}.

With the above results in hand we are ready to prove Theorem~\ref{thm: main result}.

\begin{proof}[Proof of Theorem~\ref{thm: main result}]
  Set $\ell_0=h/\varepsilon_0$ with $0<h\leq \varepsilon_0 r_{in}(\Omega)/2$ for a parameter $\varepsilon_0 \in (0, 1]$ which will eventually tend to zero.

   We divide the set of $u\in\mathbb{R}^d$ such that $B_{\ell(u)}(u)\cap \Omega \neq \emptyset$ into two disjoint parts:
   \begin{equation}\label{eq: partition}
      \Omega_* = \{u \in \mathbb{R}^d: 2\ell_0<\delta_\Omega(u)\}\, \quad \mbox{and}\quad
      \Omega^* = \{u \in \mathbb{R}^d: -\ell_0< \delta_{\Omega}(u)\leq 2\ell_0\}\,,
    \end{equation}
    where $\delta_\Omega$ denotes the signed distance function to the boundary,
    $\delta_\Omega(y) = \textup{dist}(u, \Omega^c)-\textup{dist}(u, \Omega).$
    Note that for all $u\in \Omega^*$ we have $\ell(u)=\ell_0$.

    By Lemma~\ref{lem: localization Schrodinger} we need to understand the integral with respect to $u$ of the local traces $\textup{Tr}(\phi_u H_{\Omega, b, V}(h)\phi_u)_-$. Breaking the integral according to the partition~\eqref{eq: partition} we have
    \begin{align*}
      \int_{\mathbb{R}^d}\textup{Tr}(\phi_u H_{\Omega, b, V}(h)\phi_u)_- \ell(u)^{-d}\,du 
      &=
      \int_{\Omega_*}\textup{Tr}(\phi_u H_{\Omega, b, V}(h)\phi_u)_- \ell(u)^{-d}\,du\\
      &\quad
      +
      \int_{\Omega^*}\textup{Tr}(\phi_u H_{\Omega, b, V}(h)\phi_u)_- \ell_0^{-d}\,du\,.
    \end{align*}

    For the first term Lemma~\ref{lem: local asymptotics bulk} with $V_0(x)= \tfrac{(b(x)^2-1/4)_-}{\textup{dist}(x, \partial\Omega)^{2}}$, $V_1 = V_-(x)$ yields
    \begin{align*}
      &\int_{\Omega_*}\textup{Tr}(\phi_u H_{\Omega, b, V}(h)\phi_u)_- \ell(u)^{-d}\,du\\
      &\ \ \  =
      L_d h^{-d}\int_{\Omega_*}
        \int_\Omega \phi_u^2(x)\ell(u)^{-d}\,dxdu\\
        &\ \ \  + O(h^{-d+2})\!\!
        \int_{\Omega_*}\!\Bigl[
        \ell(u)^{-2}\bigl(1+ \|b\|_{L^\infty}^2\bigr) 
        + \|V_-\|_{L^{1+d/2}(B_{\ell(u)}(u))}^{1+d/2}+\ell(u)^{-d}\|V_+\|_{L^1(B_{\ell(u)}(u))}\Bigr]du
    \end{align*}
    where we used $\|V_0\|_{L^\infty} \leq \tfrac{C}{(\textup{dist}(u, \partial\Omega)-\ell(u))^{2}}\leq  C\ell(u)^{-2}$ and $\tfrac{(b(x)^2-1/4)_+}{\mathrm{dist}(x, \partial\Omega)^{2}}\leq C \|b\|_{L^\infty}^2 \ell(u)^{-2}$.

    \medskip
    For the integral over the boundary region $\Omega^*$ Theorem~\ref{thm: local asymptotics boundary}, for $\varepsilon_0, \ell_0, h$ sufficiently small, implies
    \begin{align*}
      &\int_{\Omega^*}\textup{Tr}(\phi_u H_{\Omega, b, V}(h)\phi_u)_- \ell_0^{-d}\,du\\
      &\quad=
      L_d h^{-d}\int_{\Omega^*}\int_\Omega \phi_u^2(x)\ell_0^{-d}\,dxdu - \frac{L_{d-1}}{2}h^{-d+1}\int_{\Omega^*}\int_{\partial\Omega}\phi_u^2(x)b(x)\ell_0^{-d}\,d\mathcal{H}^{d-1}(x)du\\
      &\qquad
      + O(h^{-d})|\Omega^*|(
        o_{\ell_0\to 0^+}(1)  + \varepsilon_0^{2}|{\log(\varepsilon_0)}|) + h^{-d+1}o_{\ell_0\to 0^+}(1)\\
        &\qquad  
        + O(h^{-d+2})\int_{\Omega^*}\Bigl[\|V_-\|^{1+d/2}_{L^{1+d/2}(B_{\ell(u)}(u))}+\ell_0^{-d}\|V_+\|_{L^1(B_{\ell(u)}(u))}\Bigr]du\,.
    \end{align*}
    Here we used the fact that $b$ satisfies~\eqref{eq: b regularity}.

    Combining the estimates for the contribution from the bulk and boundary region, using~\eqref{eq:phi_properties1}, and estimating the integrals of the norms of $V_-, V_+$, we find
    \begin{equation}\label{eq: integral of local traces asymptotics}
    \begin{aligned}
      &\int_{\mathbb{R}^d}\textup{Tr}(\phi_u H_{\Omega, b, V}(h)\phi_u)_- \ell(u)^{-d}\,du\\
      &\quad=
      L_d h^{-d}|\Omega| - \frac{L_{d-1}}{2}h^{-d+1}\int_{\partial\Omega}b(x)\,d\mathcal{H}^{d-1}(x)\\
      &\qquad 
      +
      O(h^{-d})|\Omega^*|(o_{\ell_0\to 0^+}(1)+\varepsilon_0^2|{\log(\varepsilon_0)}|)  + h^{-d+1}o_{\ell_0\to 0^+}(1)\\
      &\qquad 
      + O(h^{-d+2})\bigl(1+\|b\|_{L^\infty}^2\bigr)\int_{\Omega_*}\ell(u)^{-2}\,du 
      +O(h^{-d+2})\Bigl[\|V_-\|^{1+d/2}_{L^{1+d/2}(\Omega)}+ \|V_+\|_{L^1(\Omega)}\Bigr]\,.\hspace{-26pt}
    \end{aligned}
    \end{equation}
    
    By~\cite[eq.'s (4.6)--(4.8)]{FrankLarson}, $\int_{\Omega_*}\ell(u)^{-2}\,du \leq C \ell_0^{-1}$ and $|\Omega^*|\leq C \ell_0$ with $C$ depending only on $\Omega$. Thus by Lemma~\ref{lem: localization Schrodinger},~\eqref{eq: integral of local traces asymptotics}, and since $h^2/\ell(u)^{2}\leq \varepsilon_0^2$ we conclude that
    \begin{align*}
      h^{d-1}\biggl|\textup{Tr}(H_{\Omega, b, V}&(h))_- - L_d h^{-d}|\Omega| + \frac{L_{d-1}}{2}h^{-d+1}\int_{\partial\Omega}b(x)\,d\mathcal{H}^{d-1}(x)\biggr|\\
      &\leq
      \varepsilon_0^{-1}o_{h/\varepsilon_0\to 0^+}(1)+O(\varepsilon_0|{\log(\varepsilon_0)}|)  + o_{h/\varepsilon_0\to 0^+}(1)\\
      &\quad 
      + O(\varepsilon_0)\bigl(1+\|b\|_{L^\infty}^2\bigr)
      +O(h)\Bigl[\|V_-\|^{1+d/2}_{L^{1+d/2}(\Omega)}+ \|V_+\|_{L^1(\Omega)}\Bigr]\,.
    \end{align*}
    Letting first $h$ and then $\varepsilon_0$ tend to $0$ completes the proof of Theorem~\ref{thm: main result}. 
\end{proof}

\appendix

\section{Properties of \texorpdfstring{$P_\nu$}{Pv}}
\label{app: Properties of Pnu}

Our aim is to prove the following two lemmas.

\begin{lem}\label{lem: Pnu asymptotics}
  For $\nu \geq 0$ it holds that
  \begin{equation*}
    P_\nu(t) = \int_0^1(1-\xi^2)^{(d+1)/2}\Bigl(\frac{1}{\pi}- \xi t J_\nu(\xi t)^2\Bigr)\,d\xi = O(t^{-2})\quad \mbox{as }t \to \infty\,.
  \end{equation*}
  Moreover, the implicit constant is uniformly bounded for $\nu$ in compact subsets of\/ $[0, \infty)$.
\end{lem}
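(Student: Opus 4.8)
The plan is to reduce $P_\nu$ to a single integration by parts in $\xi$, after which everything is governed by the large-argument behaviour of one auxiliary function. The key input is the classical energy identity for Bessel's equation: writing $\Psi_\nu(s)=\int_0^s rJ_\nu(r)^2\,dr$, one has
\[
  \Psi_\nu(s)=\frac{s^2}{2}\Bigl(J_\nu'(s)^2+\bigl(1-\tfrac{\nu^2}{s^2}\bigr)J_\nu(s)^2\Bigr),
\]
which follows immediately from $\frac{d}{ds}\bigl(s^2J_\nu'(s)^2+(s^2-\nu^2)J_\nu(s)^2\bigr)=2sJ_\nu(s)^2$ (a consequence of the Bessel ODE) together with the fact that the bracketed expression vanishes at $s=0$. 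Set $R_\nu(s)=\frac{s}{\pi}-\Psi_\nu(s)$, so that $R_\nu(0)=0$ and $R_\nu'(s)=\frac1\pi-sJ_\nu(s)^2$. Then $\frac1\pi-\xi tJ_\nu(\xi t)^2=\frac{d}{d\xi}\bigl(t^{-1}R_\nu(\xi t)\bigr)$, and since $(1-\xi^2)^{(d+1)/2}$ vanishes at $\xi=1$ (here $d\ge1$) while $R_\nu(0)=0$, integrating by parts yields
\[
  P_\nu(t)=\frac{d+1}{t}\int_0^1 \xi(1-\xi^2)^{(d-1)/2}R_\nu(\xi t)\,d\xi.
\]
It thus suffices to show that this integral is $O(1/t)$, uniformly for $\nu$ in compact subsets of $[0,\infty)$.

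Next I would analyse $R_\nu$ for large argument. Using the classical asymptotic expansions of $J_\nu$ and $J_\nu'$ to second order, valid uniformly for $\nu$ in bounded sets, namely $J_\nu(s)=\sqrt{2/(\pi s)}\bigl(\cos\omega_\nu-\tfrac{4\nu^2-1}{8s}\sin\omega_\nu+O(s^{-2})\bigr)$ with $\omega_\nu=s-\tfrac{\nu\pi}{2}-\tfrac{\pi}{4}$, and the analogous expansion of $J_\nu'$ obtained by differentiation, one squares and substitutes into the energy identity. The leading contributions of the two squared terms are $\frac{s}{2\pi}(1+\cos2\omega_\nu)$ and $\frac{s}{2\pi}(1-\cos2\omega_\nu)$, whose sum is exactly $\frac{s}{\pi}$; this cancels the $\frac{s}{\pi}$ in the definition of $R_\nu$, and collecting the next-order terms gives
\[
  R_\nu(s)=\frac{1}{2\pi}\cos(2s-\nu\pi)+O(1/s)\qquad (s\ge1),
\]
again uniformly in $\nu$. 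Moreover, $R_\nu=\tfrac{s}{\pi}-\int_0^s rJ_\nu(r)^2\,dr$ is bounded on $[0,1]$, uniformly for $\nu$ in a fixed compact set, by continuity and uniform boundedness of $J_\nu$ on $[0,\nu_0]\times[0,1]$.

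Finally I would insert these facts into the integral above, splitting $[0,1]$ at $\xi=1/t$ (for $t\ge1$). On $[0,1/t]$ boundedness of $R_\nu$ gives $\bigl|\int_0^{1/t}\xi(1-\xi^2)^{(d-1)/2}R_\nu(\xi t)\,d\xi\bigr|\le C\int_0^{1/t}\xi\,d\xi=O(1/t^2)$. On $[1/t,1]$ write $R_\nu(\xi t)=\frac{1}{2\pi}\cos(2\xi t-\nu\pi)+O(1/(\xi t))$: the error term contributes at most $\frac{C}{t}\int_{1/t}^1(1-\xi^2)^{(d-1)/2}\,d\xi=O(1/t)$, while the oscillatory main term $\int_{1/t}^1\xi(1-\xi^2)^{(d-1)/2}\cos(2\xi t-\nu\pi)\,d\xi$ is $O(1/t)$ by one integration by parts, the boundary contributions being $O(1/t)$ and $\xi\mapsto\xi(1-\xi^2)^{(d-1)/2}$ having derivative in $L^1(0,1)$ with norm bounded independently of $t$ and $\nu$. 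Combining, the bracketed integral is $O(1/t)$ uniformly in $\nu$, hence $P_\nu(t)=O(1/t^2)$ uniformly, as claimed.

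The main obstacle is precisely the exact cancellation of the linear-in-$s$ terms in $R_\nu$, together with the verification that the surviving $O(1)$ part of $R_\nu$ is a pure oscillation carrying no constant drift: if $R_\nu(s)$ tended to a nonzero constant, the final $\xi$-integral would only be $o(1)$ rather than $O(1/t)$, losing the claimed rate. This forces one to use the second term in the Bessel asymptotics rather than just the leading oscillation; everything else is routine stationary-phase-type estimation.
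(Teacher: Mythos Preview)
Your proof is correct and is essentially the same argument as the paper's. Both proofs integrate by parts once in $\xi$ using an explicit antiderivative of $sJ_\nu(s)^2$, then use the two-term Bessel asymptotics to identify an $O(1)$ oscillatory remainder of the form $\tfrac{1}{2\pi}\cos(2s-\nu\pi)$, and finish with one more integration by parts on the oscillatory integral. The only difference is cosmetic: the paper writes the antiderivative in the form $\tfrac{s^2}{2}J_\nu^2+\tfrac{s^2}{2}J_{\nu+1}^2-\nu sJ_\nu J_{\nu+1}$, while you use the equivalent energy form $\tfrac{s^2}{2}\bigl(J_\nu'^2+(1-\nu^2/s^2)J_\nu^2\bigr)$ (the two agree via $J_\nu'=\tfrac{\nu}{s}J_\nu-J_{\nu+1}$), and your packaging via the single function $R_\nu$ with $R_\nu(0)=0$ avoids the auxiliary $\delta$-cutoff the paper introduces.
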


\begin{lem}\label{lem: Pnu integral identity}
  For any $\nu \geq 0$ we have the identity
  \begin{equation*}
    \int_0^\infty P_\nu(t)\,dt = \int_0^\infty \int_0^1(1-\xi^2)^{(d+1)/2}\Bigl(\frac{1}{\pi}- \xi t J_\nu(\xi t)^2\Bigr)\,d\xi dt= \frac{\nu}{2}\,.
  \end{equation*}
\end{lem}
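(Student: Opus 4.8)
The plan is to compute $\int_0^\infty P_\nu(t)\,dt$ by interchanging the order of integration in the double integral $\int_0^\infty\!\int_0^1 (1-\xi^2)^{(d+1)/2}\bigl(\tfrac1\pi - \xi t J_\nu(\xi t)^2\bigr)\,d\xi\,dt$, so that the inner integral over $t$ can be evaluated in closed form. First I would substitute $s = \xi t$ in the inner integral, which turns $\int_0^\infty \bigl(\tfrac1\pi - \xi t J_\nu(\xi t)^2\bigr)\,dt$ into $\xi^{-1}\int_0^\infty \bigl(\tfrac1\pi - s J_\nu(s)^2\bigr)\,ds$. Thus, at least formally,
\begin{equation*}
  \int_0^\infty P_\nu(t)\,dt = \Bigl(\int_0^1 (1-\xi^2)^{(d+1)/2}\,\frac{d\xi}{\xi}\Bigr)\cdot\Bigl(\int_0^\infty \bigl(\tfrac1\pi - s J_\nu(s)^2\bigr)\,ds\Bigr)\,,
\end{equation*}
but both factors diverge (the first logarithmically at $\xi=0$, since $P_\nu(0)=\int_0^1(1-\xi^2)^{(d+1)/2}\tfrac{d\xi}{\pi}\ne 0$ forces the $t$-integral to have a tail behaving like the constant $P_\nu(0)$... wait — actually $P_\nu(t)=O(t^{-2})$ by Lemma~\ref{lem: Pnu asymptotics}, so $\int_0^\infty P_\nu(t)\,dt$ converges). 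The real subtlety is that the unsplit double integral is only conditionally convergent, so I cannot apply Fubini directly to the two terms separately; I must keep the combination $\tfrac1\pi - \xi t J_\nu(\xi t)^2$ together.

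Concretely, the key step is the classical Weber–Schafheitlin type evaluation
\begin{equation*}
  \int_0^R s J_\nu(s)^2\,ds = \frac{R}{\pi} - \frac{\nu}{2} + o(1)\qquad\text{as } R\to\infty\,,
\end{equation*}
which follows from the asymptotics $J_\nu(s)^2 = \tfrac{2}{\pi s}\cos^2(s - \tfrac{\nu\pi}{2} - \tfrac\pi4) + O(s^{-2})$ together with the identity $\int_0^R s J_\nu(s)^2\,ds = \tfrac{R^2}{2}\bigl(J_\nu(R)^2 - J_{\nu-1}(R)J_{\nu+1}(R)\bigr)$ (a standard consequence of the Bessel recurrences and the Wronskian), whose large-$R$ expansion via the Hankel asymptotics of $J_{\nu\pm1}(R)$ gives exactly $R/\pi - \nu/2 + o(1)$. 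Hence $\int_0^\infty \bigl(\tfrac1\pi - s J_\nu(s)^2\bigr)\,ds$ converges (conditionally) to $\nu/2$. I would then justify the interchange of integration by a truncation argument: write $\int_0^\infty P_\nu(t)\,dt = \lim_{T\to\infty}\int_0^T\!\int_0^1 (1-\xi^2)^{(d+1)/2}\bigl(\tfrac1\pi - \xi t J_\nu(\xi t)^2\bigr)\,d\xi\,dt$, apply Fubini on the bounded $t$-region $[0,T]$ (where everything is integrable), substitute $s=\xi t$ to get $\int_0^1 (1-\xi^2)^{(d+1)/2}\xi^{-1}\bigl(\int_0^{\xi T}(\tfrac1\pi - s J_\nu(s)^2)\,ds\bigr)\,d\xi$, and pass to the limit $T\to\infty$ inside the $\xi$-integral using dominated convergence — the inner partial integral $g(R):=\int_0^R(\tfrac1\pi - sJ_\nu(s)^2)\,ds$ is bounded uniformly in $R$ and converges pointwise to $\nu/2$, while $(1-\xi^2)^{(d+1)/2}\xi^{-1}$ is... not integrable near $\xi=0$. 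So I instead need the finer fact that $g(\xi T)\to\nu/2$ is not enough; rather, I should note $g(R)=O(1)$ and additionally $g(R)-\nu/2 = O(R^{-1})$ (from the $O(s^{-2})$ error in the Bessel expansion), which makes $(1-\xi^2)^{(d+1)/2}\xi^{-1}\bigl(g(\xi T)-\tfrac\nu2\bigr) = O(\xi^{-1}\cdot(\xi T)^{-1}) = O(T^{-1}\xi^{-2})$ near $\xi=0$ — still not dominated. The clean fix: combine the two pieces by writing $\int_0^1(1-\xi^2)^{(d+1)/2}\xi^{-1}g(\xi T)\,d\xi$ and recall that $P_\nu(0)=\tfrac1\pi\int_0^1(1-\xi^2)^{(d+1)/2}\,d\xi$ is finite; the divergence of $\int_0^1(1-\xi^2)^{(d+1)/2}\xi^{-1}\,d\xi$ at $0$ is exactly cancelled because $g(\xi T)\to 0$ as $\xi\to 0$ for fixed $T$ (indeed $g(R)\to 0$ as $R\to 0$). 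So near $\xi=0$ one has $g(\xi T)=O(\xi T)$ and $(1-\xi^2)^{(d+1)/2}\xi^{-1}g(\xi T)=O(T)$, bounded; for the $T\to\infty$ limit one splits $\int_0^1 = \int_0^{1/\sqrt T} + \int_{1/\sqrt T}^1$, bounding the first by $\int_0^{1/\sqrt T}O(T)\,d\xi = O(\sqrt T)\cdot$... that blows up.

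Given these complications, the cleanest route — and the one I would actually write — avoids splitting entirely: integrate $P_\nu$ against the weight directly. Write
\begin{equation*}
  \int_0^\infty P_\nu(t)\,dt = \int_0^1 (1-\xi^2)^{(d+1)/2}\lim_{R\to\infty}\int_0^R\Bigl(\tfrac1\pi - \xi t J_\nu(\xi t)^2\Bigr)\,dt\,d\xi\,,
\end{equation*}
where the inner limit, after $s=\xi t$, equals $\xi^{-1}\lim_{R\to\infty}\int_0^{\xi R}(\tfrac1\pi - sJ_\nu(s)^2)\,ds = \xi^{-1}\cdot\tfrac\nu2$ for each fixed $\xi>0$; interchanging this (now convergent, since $P_\nu(t)=O(t^{-2})$ gives absolute convergence of $\int_0^\infty P_\nu$, and one checks $\int_0^1\int_0^T|\cdots|\,dt\,d\xi<\infty$ for each $T$, then uses the uniform bound $|\int_0^{\xi R}(\tfrac1\pi-sJ_\nu(s)^2)\,ds|\le C$ together with $(1-\xi^2)^{(d+1)/2}\xi^{-1}\cdot\big(\text{the actual combination, which near }\xi=0\text{ is }O(\xi^{-1})\cdot O(\xi)=O(1)\big)$) yields $\int_0^1(1-\xi^2)^{(d+1)/2}\tfrac{\nu}{2\xi}\,d\xi$, which again diverges — confirming that the limit and the $\xi$-integral genuinely do not commute naively, and that the correct bookkeeping must retain the $\xi T$-truncation. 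I therefore expect the main obstacle to be precisely this: rigorously justifying the exchange of the $t$-integration (or its truncated limit) with the $\xi$-integration in the presence of only conditional convergence and a non-integrable weight at $\xi=0$. I would handle it by the truncated-Fubini argument above, using the two quantitative inputs $g(R)=O(\min\{R,1\})$ near $0$ and $g(R)-\nu/2=O(R^{-1})$ at infinity (both from the Bessel asymptotics established in the proof of Lemma~\ref{lem: Pnu asymptotics}), splitting $\int_0^1 = \int_0^{\varepsilon}+\int_\varepsilon^1$ and sending $T\to\infty$ before $\varepsilon\to 0$: on $[\varepsilon,1]$ dominated convergence gives $\int_\varepsilon^1(1-\xi^2)^{(d+1)/2}\tfrac{\nu}{2\xi}\,d\xi$, and on $[0,\varepsilon]$ the bound $g(\xi T)=O(1)$ plus the cancellation shows the contribution is $\int_0^\varepsilon P_\nu(0)\,d\xi + o_\varepsilon(1)$... — at which point one sees the claimed answer $\nu/2$ must come not from this route but from recognizing $\int_0^\infty P_\nu(t)\,dt$ directly as $\lim_{R}\int_0^1(1-\xi^2)^{(d+1)/2}\xi^{-1}g(\xi R)\,d\xi$ and evaluating that limit by a single change of variable $\eta=\xi R$ combined with the dominated-convergence/cancellation analysis; the upshot is the identity $\int_0^\infty P_\nu\,dt=\nu/2$, with the sign and the factor $1/2$ traced back to the $-\nu/2$ in the Bessel primitive expansion.
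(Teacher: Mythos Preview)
Your central claim is false, and this undermines the whole argument. You assert that
\[
  \int_0^R s J_\nu(s)^2\,ds \;=\; \frac{R}{\pi} - \frac{\nu}{2} + o(1),
\]
so that $g(R):=\int_0^R\bigl(\tfrac1\pi - sJ_\nu(s)^2\bigr)\,ds\to \nu/2$. But expanding the identity $\int_0^R sJ_\nu^2\,ds=\tfrac{R^2}{2}\bigl(J_\nu(R)^2-J_{\nu-1}(R)J_{\nu+1}(R)\bigr)$ to the next order in the Hankel asymptotics gives
\[
  \int_0^R s J_\nu(s)^2\,ds \;=\; \frac{R}{\pi} - \frac{1}{2\pi}\cos(2R-\nu\pi) + O(R^{-1}),
\]
so $g(R)=\tfrac{1}{2\pi}\cos(2R-\nu\pi)+O(R^{-1})$ \emph{oscillates} and does not converge at all. (Check $\nu=1/2$: then $sJ_{1/2}(s)^2=\tfrac1\pi(1-\cos 2s)$ exactly and $g(R)=\tfrac{1}{2\pi}\sin 2R$.) Consequently the improper integral $\int_0^\infty\bigl(\tfrac1\pi - sJ_\nu(s)^2\bigr)\,ds$ does not exist, and every subsequent attempt to pass to the limit in $\int_0^1(1-\xi^2)^{(d+1)/2}\xi^{-1}g(\xi T)\,d\xi$ is built on a wrong premise. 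The non-integrable factor $\xi^{-1}$ that you keep running into is a symptom of this: the value $\nu/2$ is \emph{not} the limit of $g$, so it cannot be extracted by any variant of dominated convergence applied to $\xi^{-1}g(\xi T)$.

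The paper's proof sidesteps the difficulty by using a different antiderivative, namely
\[
  sJ_\nu(s)^2 \;=\; \frac{d}{ds}\Bigl[\tfrac{s^2}{2}J_\nu(s)^2+\tfrac{s^2}{2}J_{\nu+1}(s)^2-\nu\,sJ_\nu(s)J_{\nu+1}(s)\Bigr].
\]
Integrating in $t$ over $[0,T]$ and then over $\xi\in[0,1]$ (Fubini on a finite rectangle) one gets, after the substitution $s=\xi T$,
\[
  \int_0^T P_\nu(t)\,dt \;=\; \frac{T}{2}\bigl(P_\nu(T)+P_{\nu+1}(T)\bigr)+\nu\int_0^T\Bigl(1-\tfrac{s^2}{T^2}\Bigr)^{(d+1)/2}J_\nu(s)J_{\nu+1}(s)\,ds.
\]
The first term is $O(T^{-1})$ by Lemma~\ref{lem: Pnu asymptotics}. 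In the second, the integrand $J_\nu J_{\nu+1}$ is \emph{bounded} (no $\xi^{-1}$ weight appears), and the improper integral $\int_0^\infty J_\nu J_{\nu+1}\,ds=\tfrac12$ \emph{does} converge, so a simple Abelian lemma gives the limit $\nu/2$. The missing idea in your approach is precisely this choice of primitive, which moves the $\nu$-dependence from a nonexistent constant term in $g$ to the coefficient of a convergent oscillatory integral.
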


We shall need the following asymptotic expansion for the Bessel function
\begin{equation}\label{eq: Bessel asymptotic form}
  J_\nu(t) = \Bigl(\frac{2}{\pi t}\Bigr)^{1/2}\biggl[\cos\Bigl(t- \frac{\nu\pi}{2}-\frac{\pi}{4}\Bigr)-\frac{4\nu^2-1}{8 t}\sin\Bigl(t- \frac{\nu\pi}{2}-\frac{\pi}{4}\Bigr) + O(t^{-2})\biggr]\,,
\end{equation}
where the implicit constant is uniformly bounded for $\nu$ in compact subsets of $[0, \infty)$ (see~\cite[Chapter 7]{Watson_BesselFunctions}). We shall also make use of the following identity 
\begin{equation}\label{eq: Bessel diff identity}
  x J_\nu(x)^2 = \frac{d}{dx}\Bigl[\frac{x^2}{2}J_\nu(x)^2+ \frac{x^2}{2}J_{\nu+1}(x)^2- \nu x J_\nu(x)J_{\nu +1}(x)\Bigr]\,,
\end{equation}
which is easily deduced from $J'_\nu(x)=\frac{1}{2}(J_{\nu-1}(x)-J_{\nu+1}(x))$ and the recursion formula $J_{\nu-1}(x)+J_{\nu+1}(x)= \frac{2\nu}{x}J_\nu(x)$.

\begin{proof}[Proof of Lemma~\ref{lem: Pnu asymptotics}]
  By an integration by parts,~\eqref{eq: Bessel diff identity}, and since $|J_\nu(x)|\leq 1$,
  \begin{align*}
    P_\nu(t) 
    =
    (d+1)\int_\delta^1 (1-\xi^2)^{(d-1)/2}\biggl[&\frac{\xi^2}{\pi} - \frac{t \xi^3}{2}J_\nu(\xi t)^2-\frac{t \xi^3}{2}J_{\nu+1}(\xi t)^2\\
    &\quad  + \nu \xi^2 J_\nu(\xi t)J_{\nu+1}(\xi t)\biggr]\,d\xi
    + O(t\delta^4+\delta^3)
  \end{align*}
  for any $0\leq \delta<1$. Provided $\delta t \gtrsim 1$,~\eqref{eq: Bessel asymptotic form} implies
  \begin{equation*}
    \frac{\xi^2}{\pi}- \frac{t \xi^3}{2}J_\nu(\xi t)^2-\frac{t \xi^3}{2}J_{\nu+1}(\xi t)^2+ \nu \xi^2 J_\nu(\xi t)J_{\nu+1}(\xi t)
    =
    \frac{\xi}{2\pi t}\cos(2 \xi t-\pi \nu) + O(t^{-2})\,,
  \end{equation*}
  with the implicit constant uniformly bounded for $\nu$ in compact subsets of $[0, \infty)$. Thus, we have arrived at
  \begin{align*}
    P_\nu(t)
    &=
    \frac{d+1}{2\pi t}\int_\delta^1 (1-\xi^2)^{(d-1)/2}\xi \cos(2\xi t-\pi \nu)\,d\xi + O(t^{-2}+ t\delta^4+\delta^3)\\
    &=
    \frac{d+1}{2\pi t}\int_0^1 (1-\xi^2)^{(d-1)/2}\xi \cos(2\xi t-\pi \nu)\,d\xi + O(t^{-2})\,,
  \end{align*}
  where we chose $\delta = O(t^{-1})$. An integration by parts yields
  \begin{equation*}
    \int_0^1 (1-\xi^2)^{(d-1)/2}\xi \cos(2\xi t-\pi \nu)\,d\xi
    =
    \frac{1}{2t}\int_0^1 (1-\xi^2)^{(d-3)/2}(d\xi^2- 1)\sin(2\xi t-\pi \nu)\,d\xi\,.
  \end{equation*}
  Since the integral on the right is bounded uniformly in $\nu$, this completes the proof.
\end{proof}

\begin{proof}[Proof of Lemma~\ref{lem: Pnu integral identity}]
For any $T>0$, by~\eqref{eq: Bessel diff identity}, Fubini's theorem, and a change of variables
\begin{align*}
  \int_0^T P_\nu(t)\,dt 
  &=
  \int_0^1 (1-\xi^2)^{(d+1)/2} \int_0^T\Bigl(\frac{1}{\pi}-\xi t J_{\nu}(\xi t)^2\Bigr)\,dtd\xi\\
  &=
  \frac{T}{2}(P_\nu(T)+P_{\nu+1}(T))+ \nu \int_0^T(1-s^2/T^2)^{(d+1)/2}J_\nu(s)J_{\nu+1}(s)\,ds\,.
\end{align*}
By Lemma~\ref{lem: Pnu asymptotics} only the remaining integral contributes as $T\to \infty$. By~\cite[p. 406]{Watson_BesselFunctions} and for $\nu>-1$, in the sense of an improper Riemann integral
\begin{equation*}
  \int_0^\infty J_\nu(s)J_{\nu+1}(s)\,ds = \frac{1}{2}\,.
\end{equation*}
The proof is completed by appealing to a simple Abelian theorem in Lemma~\ref{lem: Abelian lemma}.
\end{proof}

\begin{lem}\label{lem: Abelian lemma}
  If $f\in L^\infty(\mathbb{R}_+)$ and $\lim_{T\to \infty}\int_0^T f(t)\,dt = A$, then for all $\alpha >0$
  \begin{equation*}
    \lim_{T\to \infty}\int_0^T \Bigl(1- \frac{t^2}{T^2}\Bigr)^\alpha f(t)\,dt = A\,.
  \end{equation*}
\end{lem}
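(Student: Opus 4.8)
The plan is to prove Lemma~\ref{lem: Abelian lemma} by writing the weight $(1-t^2/T^2)^\alpha$ as an integral of an indicator against a suitable measure, so that the statement reduces to the hypothesis $\int_0^T f \to A$ uniformly in the cut-off. Concretely, set $F(T) = \int_0^T f(t)\,dt$, so that $F$ is bounded on $\mathbb{R}_+$ (by $\|f\|_{L^\infty}$ on compacta it is locally Lipschitz, and $F(T)\to A$) and in particular $\sup_{T\geq 0}|F(T)| =: M < \infty$. I would then use the elementary identity
\begin{equation*}
  \Bigl(1-\frac{t^2}{T^2}\Bigr)^\alpha = \alpha \int_{t}^{T} \Bigl(1-\frac{s^2}{T^2}\Bigr)^{\alpha-1}\frac{2s}{T^2}\,ds \qquad (0\leq t\leq T)\,,
\end{equation*}
valid for $\alpha>0$, and insert it into $\int_0^T (1-t^2/T^2)^\alpha f(t)\,dt$. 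Applying Fubini's theorem (justified since $f\in L^\infty$ on $[0,T]$ and the weight is integrable) swaps the order of integration and turns the inner $t$-integral into $F(s)$:
\begin{equation*}
  \int_0^T \Bigl(1-\frac{t^2}{T^2}\Bigr)^\alpha f(t)\,dt = \alpha \int_0^T \Bigl(1-\frac{s^2}{T^2}\Bigr)^{\alpha-1}\frac{2s}{T^2} F(s)\,ds\,.
\end{equation*}

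Next I would observe that the measure $d\mu_T(s) = \alpha (1-s^2/T^2)^{\alpha-1}\frac{2s}{T^2}\,ds$ on $[0,T]$ is a probability measure, since its total mass is $\alpha\int_0^T(1-s^2/T^2)^{\alpha-1}\frac{2s}{T^2}\,ds = [-(1-s^2/T^2)^\alpha]_0^T = 1$. Hence the right-hand side equals $\int_0^T F(s)\,d\mu_T(s)$, a weighted average of the values $F(s)$. As $T\to\infty$, the mass of $\mu_T$ concentrates near $s=T$ in the sense that for any fixed $R>0$, $\mu_T([0,R]) = 1 - (1-R^2/T^2)^\alpha \to 0$. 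Splitting $\int_0^T F\,d\mu_T = \int_0^R F\,d\mu_T + \int_R^T F\,d\mu_T$, the first piece is bounded by $M\,\mu_T([0,R]) \to 0$, while on $[R,T]$ we have $|F(s)-A|<\varepsilon$ once $R$ is chosen large enough (using $F(s)\to A$), so $|\int_R^T F\,d\mu_T - A\,\mu_T([R,T])| \leq \varepsilon$ and $A\,\mu_T([R,T]) \to A$. Combining, $\limsup_{T\to\infty}|\int_0^T F\,d\mu_T - A| \leq \varepsilon$, and since $\varepsilon$ is arbitrary the limit is $A$, as claimed.

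There is essentially no serious obstacle here; the only points requiring a word of care are the justification of Fubini (immediate from $f\in L^\infty$ and compactly supported weight) and the identity expressing the weight as an integral — which is just the fundamental theorem of calculus applied to $s\mapsto (1-s^2/T^2)^\alpha$. An alternative, even shorter route would bypass Fubini entirely: integrate by parts in $\int_0^T(1-t^2/T^2)^\alpha\,dF(t)$ using that $F$ is (locally) absolutely continuous, obtaining the boundary term $F(T)\to A$ plus $\int_0^T F(s)\,d\mu_T(s)$ against the same probability measure, after which the concentration argument above finishes the proof in the same way. I would present the Fubini version since it avoids any regularity discussion of $F$ beyond boundedness.
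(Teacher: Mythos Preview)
Your proof is correct and essentially the same as the paper's. The paper also reduces to the integral $\int_0^T F(s)\,d\mu_T(s)$ (obtained there via integration by parts rather than Fubini, which is the same computation), then changes variables $s=\sigma T$ to a fixed interval and applies dominated convergence; your concentration/splitting argument is an equivalent way to pass to the limit. One small slip: in your alternative integration-by-parts route the boundary term is zero (since $(1-t^2/T^2)^\alpha$ vanishes at $t=T$ and $F(0)=0$), not $F(T)$, but this does not affect the main argument you present.
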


\begin{proof}
  By integration by parts and a change of variables,
  \begin{align*}
    \int_0^T \Bigl(1-\frac{t^2}{T^2}\Bigr)^\alpha f(t)\,dt
    &=
    \int_0^T \Bigl(-\frac{d}{dt}\Bigl(1- \frac{t^2}{T^2}\Bigr)^\alpha\Bigr)\int_0^t f(s)\,dsdt\\
    &=
    2\alpha \int_0^1 (1-\sigma^2)^{\alpha-1}\sigma \int_0^{\sigma T}f(s)\,dsd\sigma\,.
  \end{align*}
  By our assumptions there is a $S_0<\infty$ so that for $S\geq S_0$
  \begin{equation*}
    \biggl|\int_0^S f(s)\,ds\biggr| \leq |A|+1\,.
  \end{equation*}
  Since $f$ is bounded,
  \begin{equation*}
    \biggl|\int_0^S f(s)\,ds\biggr| \leq S\|f\|_{\infty}\,.
  \end{equation*}
  Thus, for all $\sigma, T$,
  \begin{equation*}
    \biggl|\int_0^{\sigma T}f(s)\,ds \biggr| \leq \max\{|A|+1, S_0\|f\|_{\infty}\}\,.
  \end{equation*}
  Since $\alpha >0$, the function $\sigma \mapsto (1-\sigma^2)^{\alpha-1}\sigma$ is integrable and by dominated convergence,
  \begin{equation*}
    \lim_{T\to \infty} 2\alpha \int_0^1 (1-\sigma^2)^{\alpha-1}\sigma \int_0^{\sigma T}f(s)\,dsd\sigma = 2\alpha A \int_0^1 (1-\sigma^2)^{\alpha-1}\sigma\, d\sigma = A\,.
  \end{equation*}
  This completes the proof of Lemma~\ref{lem: Abelian lemma}.
\end{proof}

\begin{acknowledgments}
U.S.~National Science Foundation grants DMS-1363432 and DMS-1954995 (R.L.F.) and Knut and Alice Wallenberg Foundation grant KAW~2018.0281 (S.L.) are acknowledged.
\end{acknowledgments}

\small


\begin{thebibliography}{12}


\bibitem{BirmanSolomjak_Survey79}
M. \v{S}. Birman and  M. Z. Solomjak, \emph{Asymptotic properties of the spectrum of differential equations}, J. Soviet Math. \textbf{12} (1979), no.~3, 247--283.

\bibitem{BirmanSolomjak_QuantEmbedding}
M. \v{S}. Birman and  M. Z. Solomjak, \emph{Quantitative analysis in {S}obolev imbedding theorems and applications to spectral theory}, Amer. Math. Soc., Providence, R.I., 1980.


\bibitem{Davies95}
B. Davies, \emph{The Hardy constant}, Quart. J. Math. Oxford \textbf{46} (1995), no.~2, 417--431.

\bibitem{DunfordSchwartz_II}
N. Dunford and J. T. Schwartz, \emph{Linear operators. Part II}, Wiley, New York, 1963.

\bibitem{FrankGeisinger_11}
R.~L. Frank and L.~Geisinger, \emph{Two-term spectral asymptotics for the
  {D}irichlet {L}aplacian on a bounded domain}, Mathematical results in quantum
  physics, World Sci. Publ., Hackensack, NJ, 2011, pp.~138--147.

\bibitem{FrankGeisinger_12}
R.~L. Frank and L.~Geisinger, \emph{Semi-classical analysis of the {L}aplace operator with {R}obin
  boundary conditions}, Bull. Math. Sci. \textbf{2} (2012), no.~2, 281--319.  

\bibitem{FrankLarson}
R.~L. Frank and S.~Larson, \emph{{Two-term spectral asymptotics for the
  Dirichlet Laplacian in a Lipschitz domain}}, J. Reine Angew. Math. \textbf{766} (2020), 195--228.

\bibitem{FrankLarson_20}
R.~L. Frank and S.~Larson, \emph{On the error in the two-term Weyl formula for the Dirichlet Laplacian}, J. Math. Phys. \textbf{61} (2020), 043504.

\bibitem{FrankLoss_HSM_2012}
R.~L. Frank and M.~Loss, \emph{Hardy–{Sobolev}–{Maz'ya} inequalities for
  arbitrary domains}, J. Math. Pures Appl. (9) \textbf{97} (2012), no.~1,
  39--54.

\bibitem{LiebSimon_AdvMath77}
E. H. Lieb and B. Simon, \emph{The {T}homas-{F}ermi theory of atoms, molecules and solids}, Adv. Math. \textbf{23} (1977), no.~1, 22--116.

\bibitem{MR2013804}
J.~P. Solovej and W.~L. Spitzer, \emph{A new coherent states approach to
  semiclassics which gives {S}cott's correction}, Comm. Math. Phys.
  \textbf{241} (2003), no.~2-3, 383--420.

\bibitem{Watson_BesselFunctions}
G.~N. Watson, \emph{A {T}reatise on the {T}heory of {B}essel {F}unctions},
  Cambridge University Press, Cambridge, England, 1944.


\end{thebibliography}
\end{document}